\newtheorem{defi}{Definition}
\newtheorem{cor}[defi]{Corollary}
\newtheorem{thr}[defi]{Theorem}
\newtheorem{lem}[defi]{Lemma}
\newtheorem{prop}[defi]{Proposition}
\newtheorem{claim}[defi]{Claim}
\newcommand*{\myproofname}{Proof}
\newenvironment{claimproof}[1][\myproofname]{\begin{proof}[#1]}{\end{proof}}
\definecolor{light-gray}{gray}{0.95}
\definecolor{medium-gray}{gray}{0.65}
\definecolor{half-gray}{gray}{0.5}
\definecolor{dark-gray}{gray}{0.35}
\def \d{\, \mathrm{d}}
\DeclareMathOperator{\TI}{TI}
\chardef\_=`_
\title{Extremal values of degree-based entropies of bipartite graphs}
\author{
Stijn Cambie\thanks{Extremal Combinatorics and Probability Group (ECOPRO), Institute for Basic Science (IBS), Daejeon, South Korea, supported by the Institute for Basic Science (IBS-R029-C4),
E-mail: {\tt stijn.cambie@hotmail.com} or {\tt stijncambie@ibs.re.kr}.}\and
Yanni Dong\thanks{School of Mathematics and Statistics,
Northwestern Polytechnical University, P.R. China, and Faculty of Electrical Engineering, Mathematics and Computer Science, University of Twente, The Netherlands, supported by National Natural Science Foundation of China (12071370,~12131013 and~U1803263) and China Scholarship Council (202006290070),
E-mail: {\tt yndong\_math@mail.nwpu.edu.cn}.
}
\and
Matteo Mazzamurro\thanks{
Department of Computer Science,
University of Warwick, UK, supported by: EPSRC Centre for Doctoral Training in Urban Science and Progress (EP/L016400/1), EPSRC DTP (EP/N509796/1),
E-mail: {\tt  matteo.mazzamurro@warwick.ac.uk}.  }
}
\begin{document}

\maketitle

\begin{abstract}
    We characterize the bipartite graphs that minimize the (first-degree based) entropy, among all bipartite graphs of given size, or given size and (upper bound on the) order. The extremal graphs turn out to be complete bipartite graphs, or nearly complete bipartite. Here we make use of an equivalent representation of bipartite graphs by means of Young tableaux, which make it easier to compare the entropy of related graphs. We conclude that the general characterization of the extremal graphs is a difficult problem, due to its connections with number theory, but they are easy to find for specific values of the order $n$ and size $m$. We also give a direct argument to characterize the graphs maximizing the entropy given order and size.
    We indicate that some of our ideas extend to other degree-based topological indices as well.
\end{abstract}

\section{Introduction}

The first-degree based graph entropy of a graph is the Shannon entropy of its degree sequence normalized by the degree sum. In the remaining of the paper, it is just referred to as entropy.
It has attracted significant attention in organic chemistry, as measures of uniformity of a graph's structural aspect of interest \cite{mowshowitz1968entropy,bonchev1983chemical,dehmer2009entropy}. The interpretation of the entropy of a particular graph depends on knowing the extremal values.
In this paper, we continue our work~\cite{CM22+1, CM22+2}
on determining the extremal graphs (and thus extremal values) for the entropy among all graphs satisfying some natural restrictions. 
Minimizing (resp. maximizing) the entropy corresponds to maximizing (resp. minimizing) the function $h(G)=\sum_i d_i \log(d_i)$, where $(d_i)_{1 \le i \le n}$ is the degree sequence of the graph $G$,
where $G$ ranges over all members of a class of graphs. This is since the entropy $I(G)$ equals $\log(2m)-\frac 1{2m} h(G),$ where $m$ is the size (number of edges) of $G$.
Here we determine the graphs with extremal entropy among all bipartite graphs with given size $m$, or given size $m$ and (upper bound on the) order $n$.
The maximum value for the entropy is obtained by the graphs for which the degree sequence is as balanced as possible. This is a corollary of Karamata's inequality.
Note that the random graphs $G_{n,m}$ and $G_{n,p}$ are close to balanced, i.e. the maximum degree $\deg_{max}$ and the minimum degree $\deg_{min}$ are nearly the same for $m=\Theta(n^2)$, and thus have an entropy which is almost maximal. This is in line with the intuition that entropy is a measure for randomness. 

In graph theory, bipartite graphs are one of the main classes to investigate, because it finds several applications in pure and applied Mathematics. 
Hall's Matchings Theorem is a famous theorem on bipartite graphs with several applications in scheduling- and matching problems,
e.g. the kidney matching process.
When edges represent bonds between positive and negative charges, the resulting graph is bipartite. 
Furthermore, results on bipartite graphs often lead to results on more general classes of graphs. A famous example is given by the Kahn-Zhao theorem, where Kahn~\cite{Kahn01} considered a problem on the maximum number of independent sets for bipartite graphs and Zhao~\cite{Zhao10} reduced the general case to the bipartite case. 

We will show that, given the size $m$, the bipartite graphs attaining the minimum entropy are precisely the complete bipartite graphs of size $m$.
So if $m$ has $\sigma(m)$ divisors, there are $\left\lceil \frac{\sigma(m)}{2} \right\rceil$ non-isomorphic extremal graphs, all which are of the form $K_{q,y}$ with $yq=m.$

\begin{thr}\label{thr:main}
    If $G=(U \cup V,E)$ is a bipartite graph of size $m$, then $h(G) \le m \log m$. Equality occurs if and only if $G$ is a complete bipartite graph.
\end{thr}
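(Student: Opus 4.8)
The plan is to rewrite $h(G)$ as a sum over edges and then exploit the concavity of the logarithm. Writing $d(w)$ for the degree of a vertex $w$, and assuming without loss of generality that $G$ has no isolated vertices (they contribute $0=0\log 0$ to $h$ and do not affect whether $G$ is complete bipartite), I would first record the identity
\[
h(G) = \sum_{w} d(w)\log d(w) = \sum_{uv \in E}\bigl(\log d(u) + \log d(v)\bigr) = \sum_{uv \in E}\log\bigl(d(u)\,d(v)\bigr),
\]
which holds because each vertex $w$ contributes $\log d(w)$ once for each of its $d(w)$ incident edges, and each edge $uv$ (with $u\in U$, $v\in V$) is counted once. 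Next I would apply Jensen's inequality to the strictly concave function $\log$ over these $m$ summands, obtaining
\[
h(G) = \sum_{uv\in E}\log\bigl(d(u)\,d(v)\bigr) \le m\,\log\!\left(\frac1m\sum_{uv\in E} d(u)\,d(v)\right).
\]

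Since $\log$ is increasing, the theorem then reduces to the purely combinatorial inequality
\[
\sum_{uv \in E} d(u)\,d(v) \le m^2 .
\]
This step is the heart of the argument and the one I expect to be the main obstacle. To prove it, I would read both sides as counts of ordered configurations. For a fixed edge $uv$, the product $d(u)\,d(v)$ counts the pairs consisting of an edge $uw$ incident to $u$ and an edge $zv$ incident to $v$; hence
\[
\sum_{uv \in E} d(u)\,d(v) = \#\bigl\{(u,v,w,z) : uv,\, uw,\, zv \in E,\ u,z\in U,\ v,w\in V\bigr\}.
\]
I would then send each such configuration to the ordered pair of edges $(uw,\, zv)$. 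From this pair one recovers $u,w$ (the endpoints of the first edge) and $z,v$ (the endpoints of the second), so the map is injective, and its image lies in the set of ordered pairs of edges, which has size $m^2$. This yields $\sum_{uv\in E} d(u)\,d(v)\le m^2$, and combining with the previous display gives $h(G)\le m\log m$.

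Finally, for the equality case I would chain the two inequalities: $h(G)=m\log m$ forces equality in Jensen, so $d(u)\,d(v)$ is constant over all edges, and it forces $\sum_{uv\in E}d(u)\,d(v)=m^2$. The latter says the injection above is a bijection, i.e.\ for every pair of edges $uw,\,zv$ the pair $uv$ is also an edge; equivalently every non-isolated vertex of $U$ is adjacent to every non-isolated vertex of $V$, so $G$ is complete bipartite. Conversely, for $K_{q,y}$ with $qy=m$ every edge satisfies $d(u)\,d(v)=qy=m$, so both steps are tight and $h(G)=m\log m$. The one point to handle with care is that the bound passes through two inequalities, so the equality discussion must confirm that $\sum_{uv\in E}d(u)\,d(v)=m^2$ already forces the edge-products to be constant; this is automatic, since complete bipartiteness makes every product equal to $m$.
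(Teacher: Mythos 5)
Your proof is correct and takes essentially the same route as the paper's second proof (Subsection~\ref{subsec:approach2}): both rewrite $h(G)$ as $\sum_{uv\in E}\log\bigl(d(u)\,d(v)\bigr)$, apply Jensen's inequality to the concave $\log$, and reduce everything to the bound $\sum_{uv\in E}d(u)\,d(v)\le m^2$, whose equality case yields exactly the complete bipartite graphs. The only cosmetic difference is that you establish this last bound by an injection of configurations into ordered pairs of edges, whereas the paper extends the sum over tableau cells to the full rectangle and factors it as $\left(\sum_j x_j\right)\left(\sum_i y_i\right)=m^2$ (its preliminary Young-tableau claim is not actually needed for that chain).
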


When there is an upper bound $n$ on the order (one can extend with isolated vertices since $0\log(0)=0$), such that $m$ cannot be written as a product $yq$ with $y+q \le n$, the problem is harder.
We will prove that the extremal bipartite graphs given the size $m$ and order no more than $n$, are of the form $B(n,m,y)$ for some $1 \le y \le \sqrt{m}$.
Here $B(n,m,y)$ is the graph formed by connecting an additional vertex with $m-yq$ vertices in $V$ of $K_{q,y}=(U \cup V,E),$ where $q=\left\lfloor \frac{m}{y} \right\rfloor.$

We will also consider a different representation of bipartite graphs by means of Young tableaux, as it simplifies the description of the graph operations needed in the proofs.
For a bipartite graph $G=(U \cup V,E)$, let us write $U=\{u_1 , u_2, \ldots, u_x\}$ and $V=\{v_1, v_2, \ldots, v_y\}$ such that $\deg u_i \ge \deg u_j$ and $\deg v_i \ge \deg v_j$ whenever $i \le j.$
Let $y_i=\deg(u_i)$ and $x_j=\deg(v_j)$ for every $1\le i \le x$ and $1\le j \le y.$
We associate the tableau $T$ which contains a cell $(i,j)$ if and only if $u_iv_j \in E.$
Note that this gives a one-to-one correspondence between tableaux and bipartite graphs.

An example of an extremal bipartite graph with $n=10$ and $m=22$ has been represented in these two ways in Figure~\ref{fig:(10,22)}.
Remark that the number of cells in column $i$ corresponds to the degree of $u_i$ and the number of cells in row $j$ equals the degree of $v_j.$

\begin{figure}[h]

\begin{minipage}[b]{.49\linewidth}
\begin{center}
    \begin{tikzpicture}
    {
					
	\foreach \x in {0,1,2,3,4}{\foreach \y in {1,2,3,4}{\draw[thick] (\x,2) -- (\y,0);}}
	
	\draw[thick] (5,2) -- (3,0);
	\draw[thick] (5,2) -- (4,0);

		\foreach \x in {0,1,2,3,4,5}{\draw[fill] (\x,2) circle (0.1);}
		\foreach \x in {1,2,3,4,5,6}{\node at (\x-1,2.4) {$u_\x$};}
		\foreach \x in {1,2,3,4}{\draw[fill] (\x,0) circle (0.1);}
	    \foreach \x in {4,3,2,1}{\node at (5-\x,-0.4) {$v_\x$};}
				
	}
	\end{tikzpicture}\\
\subcaption{Graph representation}
\label{fig:Graph(10,22)}
\end{center}
\end{minipage}\quad\begin{minipage}[b]{.49\linewidth}
\begin{center}
\ytableausetup{centertableaux}
\begin{ytableau}
  \none[5] &  &  & &  &  & \none \\
  \none[5] &  &  & &  &  & \none \\
  \none[6] &  &  & &  &  & \\
  \none[6] &  &  & &  &  &  \\
  \none & \none[4] & \none[4] & \none[4] & \none[4] & \none[4] & \none[2]
\end{ytableau}\\
\subcaption{Associated Young tableau}
\label{fig:Young(10,22)}
\end{center}
\end{minipage}
\caption{Two representations of the extremal bipartite $(10,22)$-graph $B(10,22,4)$}\label{fig:(10,22)}
\end{figure}
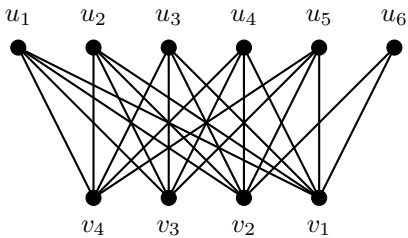
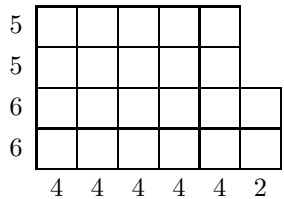

In Section~\ref{sec:bip_case}, we prove that the bipartite graphs minimizing the entropy given the size are exactly the complete bipartite graphs. We give two different approaches to prove this. One of them is a proof by induction, and the other one consists in proving that the associated tableau has to be a rectangle.
In Section~\ref{sec:denseY} we study the possible extremal bipartite graphs given order $n$ and size $m$, by proving that under certain restrictions they are of the form $B(n,m,y)$ and we estimate the value of $h$ for these graphs.
Next, in Section~\ref{sec:proofYdense}, we prove that the extremal graphs are indeed of the form $B(n,m,y).$ 
For this, we use the equivalent representation with the Young tableaux and give local operations that decrease the entropy (equivalently, such that $h$ increases).
Furthermore, in Section~\ref{sec:chemicalindex} we remark that some of the ideas can be applied to other chemical indices, also called degree-based topological indices, such as e.g. the Second Zagreb index and Reciprocal Randic index.
Just as has been done in~\cite{Cam21}, we notice that certain results hold more generally and we give the essence of the proofs of some known results. 
Here, we give short proofs for the main results in~\cite{HLP22} and~\cite{Dong21} about the graphs and bipartite graphs maximizing the entropy in the more general context of a class of chemical indices.
Finally, in Section~\ref{sec:conc}, we give some conclusions on the precise extremal graphs $B(n,m,y)$. We do this based on computational results and the estimates in Section~\ref{sec:denseY}, as well as some number theory.

\subsection{Preliminary definitions and results}\label{subsec:prelimanaryresults}

In this subsection, we define the notions and help functions we will frequently use, as well as give some basic results.

\begin{defi}
    We define the function $f(x)=x\cdot \log (x)$ 
    for $x\geq 0$. Here $\log$ denotes the natural logarithm and $f(0):=0.$
    For a graph $G$ with degree sequence $(d_i)_{1 \le i \le n}$, we define $h(G)=\sum_i f(d_i).$
    The entropy $I(G)$ of a graph of size $m$ equals $\log(2m)-\frac{1}{2m}h(G).$
\end{defi}

In the following definition of majorization and the inequality of Karamata, we only consider non-increasing sequences.

\begin{defi}
    A sequence $(x_i)_{1 \le i \le n}$ majorizes the sequence $(y_i)_{1 \le i \le n}$ if and only if  $\sum_{1 \le i \le j } x_i \ge \sum_{1 \le i \le j } y_i$ for every $1 \le j \le n$ and equality does hold for $j=n$.
\end{defi}

\begin{thr}[Karamata's inequality, \cite{Kar32}]\label{thr:Kar}
    Let $(x_i)_{1 \le i \le n}$ be a sequence majorizing the sequence $(y_i)_{1 \le i \le n}$. 
    For every convex function $g$ we have $\sum_{1 \le i \le n } g(x_i) \ge \sum_{1 \le i \le n } g(y_i).$ Furthermore this inequality is strict if the sequences are not equal and $g$ is a strictly convex function. For concave functions, the same holds with the opposite sign.
\end{thr}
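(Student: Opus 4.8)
The plan is to prove Karamata's inequality by combining the \emph{supporting line} property of convex functions with Abel summation (summation by parts). For $0 \le j \le n$ set $S_j = \sum_{i=1}^{j}(x_i - y_i)$, with $S_0 = 0$; the majorization hypothesis is then exactly the statement that $S_j \ge 0$ for every $j$ and $S_n = 0$. The idea is to bound the difference $\sum_i g(x_i) - \sum_i g(y_i)$ from below by a nonnegative combination of the partial sums $S_j$, so that majorization delivers the result directly.

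First I would fix, for each $i$, a slope $c_i$ of a supporting line to $g$ at the point $y_i$; that is, $c_i$ is a subgradient of $g$ at $y_i$ (when $g$ is differentiable one simply takes $c_i = g'(y_i)$). Convexity gives the one-variable inequality
\[
g(x_i) \ge g(y_i) + c_i (x_i - y_i) \qquad \text{for every } i.
\]
Because subgradients of a convex function are nondecreasing in their argument and $(y_i)$ is non-increasing, the slopes can be chosen so that $c_1 \ge c_2 \ge \cdots \ge c_n$. Summing the supporting-line inequalities over $i$ reduces the whole problem to showing that $\sum_{i=1}^{n} c_i(x_i - y_i) \ge 0$.

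The key step is then to rewrite this sum via summation by parts. Using $x_i - y_i = S_i - S_{i-1}$ and reindexing, one obtains the identity
\[
\sum_{i=1}^{n} c_i(x_i - y_i) = c_n S_n + \sum_{i=1}^{n-1}(c_i - c_{i+1}) S_i .
\]
Since $S_n = 0$ the boundary term vanishes; since $(c_i)$ is non-increasing we have $c_i - c_{i+1} \ge 0$; and since $S_i \ge 0$ by majorization, every summand is nonnegative, so the sum is $\ge 0$. This yields $\sum_i g(x_i) \ge \sum_i g(y_i)$, and the concave statement follows by applying the result to $-g$. For strictness, suppose $g$ is strictly convex and the sequences differ, so $x_k \ne y_k$ for some $k$. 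For strictly convex $g$ the supporting-line inequality is strict whenever $x_i \ne y_i$, hence $g(x_k) > g(y_k) + c_k(x_k - y_k)$; summing gives $\sum_i g(x_i) > \sum_i g(y_i) + \sum_i c_i(x_i - y_i) \ge \sum_i g(y_i)$, as claimed.

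I expect the only genuinely delicate point to be the choice of the supporting slopes $c_i$ when $g$ is merely convex rather than differentiable: one must invoke the existence of subgradients for convex functions on $\mathbb{R}$ and check that they can be selected monotone along a monotone sequence of base points. For the application in this paper this subtlety evaporates, since $g = f$ with $f(x) = x\log x$ is smooth and strictly convex on $(0,\infty)$, so $c_i = f'(y_i) = \log y_i + 1$ is automatically well defined and strictly decreasing in $y_i$; the rest is the purely algebraic Abel-summation computation above.
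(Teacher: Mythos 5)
The paper does not prove this statement at all: it is imported as a classical result, cited to [Kar32], so there is no internal proof to compare against. Your argument is the standard textbook proof of Karamata's inequality (supporting lines plus Abel summation), and it is essentially correct: the identity $\sum_i c_i(x_i-y_i)=c_nS_n+\sum_{i=1}^{n-1}(c_i-c_{i+1})S_i$ checks out with $S_0=0$, the sign analysis is right, and your strictness argument via the strict supporting-line inequality at any index with $x_k\ne y_k$ is sound. Two small points deserve attention. First, the subtlety you flag does not quite ``evaporate'' in this paper's setting: the paper applies the inequality to $f(x)=x\log x$ on $[0,\infty)$ with $f(0):=0$, and degree sequences may contain zeros, where $f'(0^+)=-\infty$ and no finite subgradient exists; your formula $c_i=\log y_i+1$ is undefined at $y_i=0$. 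The clean repair, which also removes any appeal to subgradient existence, is to take divided-difference slopes $c_i=\bigl(g(x_i)-g(y_i)\bigr)/(x_i-y_i)$ when $x_i\ne y_i$ (any intermediate slope otherwise): convexity makes the slope function monotone in each argument, so $(c_i)$ is non-increasing since both $(x_i)$ and $(y_i)$ are, and the supporting-line inequality becomes an exact identity $\sum_i g(x_i)-\sum_i g(y_i)=\sum_i c_i(x_i-y_i)$, after which your Abel computation finishes the proof verbatim. Second, a slip of phrasing: $f'(y)=\log y+1$ is strictly \emph{increasing} in $y$; what you need (and what holds) is that $c_i=f'(y_i)$ is non-increasing in the index $i$ because $(y_i)$ is non-increasing.
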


A corollary of Karamata's inequality is the following

\begin{cor}\label{cor:majdeg_kar}
    Let $G$ and $G'$ be two graphs such that the degree sequence of $G'$ majorizes the degree sequence of $G$. 
    Then $h(G') \ge h(G).$ Furthermore, equality occurs if and only if the two degree sequences are equal.
\end{cor}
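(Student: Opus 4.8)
The plan is to derive this immediately from Karamata's inequality (Theorem~\ref{thr:Kar}) applied to the function $f(x)=x\log x$. By definition $h(G)=\sum_i f(d_i)$, so the statement reduces to showing that $f$ is (strictly) convex on $[0,\infty)$ and then invoking Karamata verbatim. For $x>0$ we have $f'(x)=\log x + 1$ and $f''(x)=1/x>0$, so $f$ is strictly convex on $(0,\infty)$; since $f$ is continuous at $0$ with $f(0)=0$, it remains strictly convex on all of $[0,\infty)$. This is the only piece of genuine content, and it is routine.

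First I would address the one technical mismatch: Karamata and the definition of majorization require two sequences \emph{of the same length} $n$, whereas $G$ and $G'$ may a priori have different orders. I would pad the shorter degree sequence with zeros (equivalently, add isolated vertices), so that both become non-increasing sequences of the same length $n=\max\{|V(G)|,|V(G')|\}$. Because $f(0)=0$, this padding changes neither $h(G)$ nor $h(G')$, and by hypothesis the padded degree sequence of $G'$ still majorizes that of $G$ (the partial-sum inequalities and the equality at $j=n$ are preserved when trailing zeros are appended to both).

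With the sequences aligned, I would apply Theorem~\ref{thr:Kar} directly with $g=f$: since the degree sequence of $G'$ majorizes that of $G$ and $f$ is convex, $\sum_i f(d_i') \ge \sum_i f(d_i)$, i.e.\ $h(G')\ge h(G)$. For the equality case, the strict convexity of $f$ together with the final clause of Karamata's inequality gives that equality holds precisely when the two (padded) sequences coincide, which is exactly the condition that the degree sequences of $G$ and $G'$ are equal.

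There is no real obstacle here; the statement is a direct corollary. The only point requiring a moment's care is the bookkeeping that makes majorization applicable to graphs of different orders, namely the zero-padding together with the observation $f(0)=0$, after which Karamata's inequality yields both the inequality and its equality characterization at once.
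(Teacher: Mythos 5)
Your proposal is correct and matches the paper's approach: the paper presents this statement as an immediate consequence of Karamata's inequality (Theorem~\ref{thr:Kar}) applied to the strictly convex function $f(x)=x\log x$, offering no further written proof. Your additional care about zero-padding degree sequences of different lengths (harmless since $f(0)=0$) and your verification of strict convexity of $f$ on $[0,\infty)$ are exactly the routine details the paper leaves implicit.
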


This was also observed by Ghalavand et al.~\cite{GEA19}.

We will also make use of difference graphs, which were introduced by Hammer et al.~\cite{Hammer90}.
The following equivalent characterization is due to Mahadev and Peled~\cite{MP95}.

\begin{thr}[\cite{MP95}]\label{thr:MP95}
    Let $G=(U\cup V, E)$ be a bipartite graph. 
    The graph $G$ is a difference graph if and only if one of the following equivalent conditions is true.
\begin{itemize}
      \item[(a)] there are no $u_1,u_2 \in U$ and $v_1,v_2\in V$ such that $u_1v_1,u_2v_2 \in E(G)$ and $u_1v_2,u_2v_1 \notin E(G)$;
      \item[(b)] every induced subgraph without isolated vertices has on each side of the bipartition a domination vertex, that is, a vertex which is adjacent to all the vertices on the other side of the bipartition. 
\end{itemize}
\end{thr}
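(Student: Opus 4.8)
The plan is to pass through a single structural property — that the neighbourhoods on each side are linearly ordered by inclusion — and to show it is equivalent to (a), to (b), and to being a difference graph. Throughout, recall that $G=(U\cup V,E)$ is a \emph{difference graph} when there exist a threshold $T>0$ and weights $a_x\in\mathbb{R}$ with $|a_x|<T$ for each vertex $x$ such that $xy\in E$ if and only if $|a_x-a_y|\ge T$. \textbf{Step 1 (condition (a) is nestedness).} First I would observe that the configuration forbidden in (a) is exactly an induced $2K_2$ on $\{u_1,u_2,v_1,v_2\}$, and that it occurs precisely when two vertices on the same side have incomparable neighbourhoods: if $N(u_1)\not\subseteq N(u_2)$ and $N(u_2)\not\subseteq N(u_1)$, then witnesses $v_1\in N(u_1)\setminus N(u_2)$ and $v_2\in N(u_2)\setminus N(u_1)$ produce the forbidden quadruple, and conversely such a quadruple yields incomparable neighbourhoods. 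Hence (a) holds if and only if $\{N(u):u\in U\}$ is a chain under inclusion, and symmetrically for $V$.

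\textbf{Step 2 ((a) $\Leftrightarrow$ (b)).} For (a) $\Rightarrow$ (b), let $H$ be an induced subgraph with no isolated vertex. Chains are inherited by induced subgraphs, so among the $U$-side of $H$ there is a vertex whose $H$-neighbourhood contains all the others; since $H$ has no isolated vertices, this neighbourhood is all of the $V$-side of $H$, i.e.\ a dominating vertex, and symmetrically on the other side. For the contrapositive of (b) $\Rightarrow$ (a), an induced $2K_2$ is an induced subgraph with no isolated vertex in which neither vertex of its $U$-side dominates the $V$-side, contradicting (b).

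\textbf{Step 3 (difference graph $\Leftrightarrow$ (a)).} For the forward direction, the weights totally order the vertices; assuming (as one may, per component, up to swapping sides and placing isolated vertices at weight $0$) that $U$ carries the nonnegative and $V$ the negative weights, a short sign analysis of the four inequalities defining a forbidden $2K_2$ forces $|a_{v_1}|<|a_{v_2}|$ and $|a_{v_2}|<|a_{v_1}|$ simultaneously, a contradiction, so (a) holds. For the converse I would use the chains from Step 1 to order $U$ and $V$ by decreasing degree, turning the adjacency into a staircase (the Young diagram used later in the paper) with $u_iv_j\in E$ iff $j\le\deg(u_i)$. Choosing column values $c_1<\cdots<c_q$ in $(0,T)$, placing each $a_{u_i}$ in the interval $[c_{\deg(u_i)},c_{\deg(u_i)+1})$, and setting $|a_{v_j}|=T-c_j$ then realises exactly this staircase, exhibiting $G$ as a difference graph.

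The main obstacle is the converse in Step 3: converting the purely combinatorial nestedness into an explicit bounded weight assignment and checking that the signs reproduce the prescribed bipartition $U\cup V$. The monotone staircase shape is precisely what makes a consistent choice of weights possible; alternatively one can argue by induction, repeatedly deleting a dominating pair furnished by (b) and extending the weights of the smaller graph. The remaining implications are then routine.
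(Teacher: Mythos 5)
The paper never proves this theorem: it is imported verbatim from Mahadev and Peled~\cite{MP95} and used as a black box (condition~(a) in the proof of Lemma~\ref{lem:extr_is_diff_graph}, condition~(b) in the first proof of Theorem~\ref{thr:main}). So the comparison here is with the standard literature proof rather than with anything internal to the paper; your proposal follows exactly that standard route, pivoting on nestedness of neighbourhoods (the ``chain graph'' characterization). Steps~1 and~2 are correct and complete as written. The converse half of Step~3 is also sound: once (a) forces $N(u_i)=\{v_1,\dots,v_{\deg u_i}\}$ after sorting both sides by degree, your thresholds $0=c_0<c_1<\cdots<c_q<c_{q+1}=T$ with $a_{u_i}\in[c_{\deg u_i},\,c_{\deg u_i+1})$ and $a_{v_j}=c_j-T$ realize the staircase, keep each side internally non-adjacent, and satisfy $|a_x|<T$ for every vertex.

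The one genuine flaw is the normalization at the start of the forward half of Step~3. The bipartition $(U,V)$ is given, so ``swapping sides'' is not available to you, and the only weight symmetry preserving the realization is \emph{global} negation: negating the weights of a single component preserves adjacencies inside that component but can create cross-component adjacencies (with $T=1$, two vertices of weight $0.5$ in different components are non-adjacent, but become adjacent after negating one of them). Hence you may not assume, component by component, that $U$ is the nonnegative side, and your sign analysis silently excludes the case where the two edges $u_1v_1$ and $u_2v_2$ lie in components with opposite orientations. The fix is cheap and removes all normalization: the two edges give $|a_{u_1}-a_{v_1}|+|a_{u_2}-a_{v_2}|\ge 2T$, while for any reals one has $|a-b|+|c-d|\le\max\{|a-c|+|b-d|,\;|a-d|+|b-c|\}$, and all four pairs appearing on the right-hand side ($u_1u_2$ and $v_1v_2$ because they lie on the same side of the given bipartition, $u_1v_2$ and $u_2v_1$ by assumption) are non-edges, so each contributes strictly less than $T$ --- a contradiction. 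With that one-line replacement, which handles the one-component and two-component cases uniformly, your proof is complete.
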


\begin{lem}\label{lem:extr_is_diff_graph}
If $G$ is a graph maximizing $h(G)$ among all bipartite graphs of size $m$, then $G$ is a difference graph. 
\end{lem}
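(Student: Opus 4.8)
The plan is to prove the contrapositive: I will show that any bipartite graph $G$ that is \emph{not} a difference graph cannot maximize $h$, because a single edge switch strictly increases $h$ while preserving the size $m$. By the characterization in Theorem~\ref{thr:MP95}(a), if $G$ is not a difference graph then there exist $u_1,u_2\in U$ and $v_1,v_2\in V$ with $u_1v_1,u_2v_2\in E$ and $u_1v_2,u_2v_1\notin E$. Since this configuration is symmetric under simultaneously swapping $u_1\leftrightarrow u_2$ and $v_1\leftrightarrow v_2$, I may assume without loss of generality that $\deg(u_1)\ge \deg(u_2)$.

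Next I would define $G'$ to be the graph obtained from $G$ by deleting the edge $u_2v_2$ and adding the edge $u_1v_2$. This is a legitimate operation: $u_2v_2$ is present and $u_1v_2$ is absent, so $G'$ is again a simple bipartite graph on the same vertex set with the same bipartition, and clearly $|E(G')|=m$. The only degrees that change are those of $u_1$ and $u_2$: writing $a=\deg(u_1)$ and $b=\deg(u_2)$, we have $\deg_{G'}(u_1)=a+1$ and $\deg_{G'}(u_2)=b-1$, while every other vertex (in particular $v_1$ and $v_2$) keeps its degree.

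The heart of the argument is then the sign of
\[
h(G')-h(G)=\bigl(f(a+1)-f(a)\bigr)-\bigl(f(b)-f(b-1)\bigr).
\]
Because $f(x)=x\log x$ is strictly convex on $[0,\infty)$, its consecutive increments $f(t+1)-f(t)$ are strictly increasing in $t$; since $a\ge b$ gives $a>b-1$, the increment at $t=a$ strictly exceeds the increment at $t=b-1$, so $h(G')>h(G)$. Equivalently, replacing the values $(a,b)$ with $a\ge b$ by $(a+1,b-1)$ in the multiset of degrees produces a sequence that majorizes the original and differs from it, so Corollary~\ref{cor:majdeg_kar} yields the strict inequality directly. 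This contradicts the maximality of $h(G)$, proving that a maximizer must be a difference graph.

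The technical points to handle are minor: I must treat the boundary case $b=1$, where $f(b-1)=f(0)=0$ and $u_2$ becomes isolated in $G'$, which is still an admissible bipartite graph of size $m$; and I must make sure the strictness in the convexity step genuinely uses $a\ge b$ (so that $a>b-1$) rather than only $a\ge b-1$, which would permit the degenerate equality case. The only real idea is the single regressive edge switch combined with the strict convexity of $f$, and I do not expect any serious obstacle beyond this bookkeeping.
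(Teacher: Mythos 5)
Your proposal is correct and follows essentially the same route as the paper: the same edge switch $G'=\left(G\backslash \{u_2v_2\}\right)\cup\{u_1v_2\}$ after the same WLOG assumption $\deg(u_1)\ge\deg(u_2)$, concluded via strict majorization and Corollary~\ref{cor:majdeg_kar}. Your additional explicit increment computation and the $b=1$ boundary check are fine but not needed beyond what the majorization argument already gives.
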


\begin{proof}
    Suppose that $G$ is not a difference graph. By Theorem~\ref{thr:MP95}, there are four vertices
    $u_1, u_2\in U$ and $v_1, v_2 \in V$ such that $u_1v_1, u_2v_2 \in E(G)$ and $u_1v_2, u_2v_1 \not \in E(G).$
    Assume without loss of generality that $\deg(u_1) \ge \deg(u_2)$. In that case the degree sequence of the graph $G'=\left(G\backslash \{u_2v_2\}\right) \cup \{u_1v_2\}$ will majorize the degree sequence of $G$ (strictly) and so we get the desired contradiction by Corollary~\ref{cor:majdeg_kar}.
\end{proof}

\begin{lem}\label{lem:max_with_balanced}
    Let $t,\ell>0$ be fixed positive integers.
    Under the condition that $\sum_{j=1}^x z_i =t$ and all $z_i \ge 0$ are integers, $ \sum_{j=1}^x f(z_j+\ell) -\sum_{j=1}^x f(z_j)$ is maximized when $z_1=z_2=\ldots=z_x=\left\lfloor \frac tx \right\rceil.$
\end{lem}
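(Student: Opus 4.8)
The plan is to rewrite the objective as a single sum of a one-variable function and then exploit concavity together with Karamata's inequality (Theorem~\ref{thr:Kar}). Define $g(z) = f(z+\ell) - f(z)$ for $z \ge 0$, so that the quantity to be maximized is exactly $\sum_{j=1}^x g(z_j)$ subject to $\sum_{j=1}^x z_j = t$ with $z_j \in \mathbb{Z}_{\ge 0}$. First I would show that $g$ is strictly concave. For $z > 0$ one computes $g'(z) = \log(z+\ell) - \log z = \log\frac{z+\ell}{z}$ and hence $g''(z) = \frac{1}{z+\ell} - \frac 1z = \frac{-\ell}{z(z+\ell)} < 0$, so $g$ is strictly concave on $(0,\infty)$. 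Since $f(0)=0$, the value $g(0) = \ell\log\ell$ is the limit of $g(z)$ as $z\to 0^+$, so $g$ is continuous on $[0,\infty)$ and therefore strictly concave on the whole closed half-line, even though $g'$ blows up at the origin.

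The second ingredient is a purely combinatorial majorization fact: among all non-increasing integer sequences of length $x$ and sum $t$, the balanced one $(b_j)_{1\le j\le x}$, whose entries all lie in $\{\lfloor t/x\rfloor, \lceil t/x\rceil\}$, is majorized by every other such sequence. Indeed, if a non-increasing integer sequence $(z_j)$ contains two entries differing by at least $2$, then moving one unit from a larger entry to a smaller one keeps the sum fixed and strictly lowers the sequence in the majorization order; iterating this transfer terminates at $(b_j)$, which is thus the unique minimal element. When $x \mid t$ this balanced sequence is the constant sequence $z_j = t/x = \lfloor t/x\rceil$; in general it is the ``as equal as possible'' distribution that the statement abbreviates by $\lfloor t/x\rceil$.

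Combining the two, I would apply the concave case of Karamata's inequality. Since $(b_j)$ is majorized by any competitor $(z_j)$, strict concavity of $g$ gives $\sum_j g(b_j) \ge \sum_j g(z_j)$, with equality if and only if the two sequences coincide. Hence the balanced sequence is the unique maximizer of $\sum_{j=1}^x \bigl(f(z_j+\ell) - f(z_j)\bigr)$, which is precisely the claim.

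I expect the only delicate point to be the boundary value $z_j = 0$, which occurs exactly when $x > t$: there one must check that the $f(0)=0$ convention is compatible with concavity of $g$ up to and including the origin, despite the singularity of $g'$ at $0$. The upper semicontinuity argument above settles this, and once concavity on the closed half-line is secured the rest is a direct invocation of Theorem~\ref{thr:Kar}.
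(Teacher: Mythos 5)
Your proof is correct and takes essentially the same route as the paper: the paper's proof also observes that $\Delta^{\ell}(z)=f(z+\ell)-f(z)$ is strictly concave and that every integer sequence of length $x$ with sum $t$ majorizes the balanced sequence, and then invokes Karamata's inequality. Your write-up merely makes explicit the details the paper leaves implicit (the second-derivative computation, the behaviour at $z=0$ under the convention $f(0)=0$, and the unit-transfer argument showing the balanced sequence is the minimal element in the majorization order).
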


\begin{proof}
    Note that the function $\Delta^{\ell}(z)=f(z+\ell) - f(z)$ is a strictly concave function for every $\ell>0$ and every sequence of $x$ integers with sum $t$ do majorize the sequence with $z_1=z_2=\ldots=z_x=\left\lfloor \frac tx \right\rceil.$
    Now the result follows immediately by Karamata's inequality.
\end{proof}

For the function $\Delta^1$, we write $\Delta$ for ease of notation.
We define this one separately, as it will be used frequently.

\begin{defi}
    The function $\Delta$ is defined by $\Delta(x)=f(x)-f(x-1)=1+\int_{x-1}^{x} \log t \, \mathrm{d}t$.
    This is a strictly increasing and concave function.
\end{defi}

We also use Landau notation, such as $o()$ and $\omega()$.
A function $q(x,y)$ or expression is $o_y(p(x,y))$ or $\omega_y(p(x,y))$ if 
$\frac{q(x,y)}{p(x,y)} \to 0$ respectively $\frac{\lvert q(x,y) \rvert }{\lvert p(x,y) \rvert } \to \infty$  when $y \to \infty$.
Sometimes the dependency on $y$ is removed to keep the notation light.

We also use the notation $[k]=\{1,2,\ldots, k\}$ and $[k..\ell]=\{k,k+1,\ldots, \ell-1,\ell\}.$

\section{Minimum entropy of bipartite graphs of fixed size}\label{sec:bip_case}

In this section, we prove Theorem~\ref{thr:main}.
In both approaches, we assume that our bipartite graph is $G=(U \cup V,E)$, $U=\{u_1 , u_2, \ldots, u_x\}$ and $V=\{v_1, v_2, \ldots, v_y\}$ are the vertices with degree at least $1.$
Here we assume these are ordered, i.e. $\deg u_i \ge \deg u_j$ and $\deg v_i \ge \deg v_j$ whenever $i \le j.$
Let $y_i=\deg(u_i)$ and $x_j=\deg(v_j)$ for every $1\le i \le x$ and $1\le j \le y.$
So with this notation, we have $y=y_1$ and $x=x_1.$

\subsection{Approach 1}

\begin{proof}[Proof of Theorem~\ref{thr:main}]
    We prove the statement by induction on $m.$
    The base case $m=1$ is trivial since we only have one edge.
    So assume the statement is true when the size is at most $m-1$ and let $G$ be a graph maximizing $h(G)$ among all graphs of size $m$.
    By Lemma~\ref{lem:extr_is_diff_graph} we can assume that $G$ is a difference graph. 
    This implies that $u_1$ is a dominating vertex (dominates $V$). Let $G'=G \backslash u_1$.
    By the induction hypothesis applied to $G'$ and Lemma~\ref{lem:max_with_balanced} applied with $\ell=1$ and $z_i=x_i-1$, we know
    \begin{align*}\label{eq:upperboundh_cnG}
        h(G)&=h(G')+f(y)+\sum_{j=1}^y f(x_j) -\sum_{j=1}^y f(x_j-1)\nonumber \\ 
        &\le h(K_{1,m-y}) +f(y) + y\left(  f\left( \frac{m}{y} \right) - f\left( \frac{m}{y}-1 \right) \right).
    \end{align*}
    Equality here is only attained if $x_j=\frac{m}{y}$ for every $1\le j \le y.$
    Now the conclusion is direct as
     \begin{align*}
            h(G)\le (m-y)\log(m-y)+y\log(y)+m\left(\log(m)-\log(y)\right)-(m-y)\left(\log(m-y)-\log(y)\right)= m\log m.
    \end{align*}
    By induction, we see that equality is attained precisely for the complete bipartite graphs on $m$ edges.
\end{proof}

\subsection{Approach 2}\label{subsec:approach2}

In this subsection, we give an alternative proof for the fact that $K_{1,m}$ minimizes the entropy among all bipartite graphs of size $m$, which does not use any prerequisites.
Note that the proof can be formulated without the notion of a tableau and that we give a short proof of Lemma~\ref{lem:extr_is_diff_graph} as a claim in this notation.

In every cell $(i,j)$ of the associated tableau $T$ of a bipartite graph $G$, we put $\log(y_ix_j )=\log(x_j)+\log(y_i)$.
The sum over all cells, $h(T)=\sum_{(i,j) \in T} \log(x_j y_i)$ is now exactly equal to $\sum_j f(x_j)+\sum_i f(y_i)=h(G).$

\begin{proof}[Proof of Theorem~\ref{thr:main}]
    We first prove that the associated tableau $T$ is a Young tableau, i.e. if $(i,j) \in T$ and $0<i'\le i$ and $0<j'\le j,$ then $(i',j') \in T.$ 
    
    \begin{claim}
        If $G=(U \cup V,E)$ is a bipartite graph maximizing $h(G)$ among all bipartite graphs of size $m$, then its associated tableau is a Young tableau.
    \end{claim}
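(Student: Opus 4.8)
The plan is to show that if the associated tableau $T$ is not a Young tableau, then we can perform a local operation on the cells that strictly increases $h(T) = h(G)$, contradicting maximality. By Theorem~\ref{thr:MP95}(a), the condition that $T$ fails to be a Young tableau is essentially the statement that $G$ is not a difference graph: there exist $u_1, u_2 \in U$ and $v_1, v_2 \in V$ with $u_1v_1, u_2v_2 \in E$ but $u_1v_2, u_2v_1 \notin E$, which in tableau language means there are two cells $(i,j)$ and $(i',j')$ present with $i < i'$ and $j > j'$, while $(i',j)$ and $(i,j')$ are absent. (The ordering conventions $\deg u_i \ge \deg u_j$ and $\deg v_i \ge \deg v_j$ for $i \le j$ make this the natural way the Young property can fail.) So the first step is to translate the failure of the Young property into this explicit configuration of a present cell, an absent cell, and the rectangle they span.

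Next I would specify the swap. Since $\deg u_i = y_i \ge y_{i'} = \deg u_{i'}$ (as $i < i'$), I would move the offending cell so as to push weight toward the already-heavier column/row. Concretely, following the idea in Lemma~\ref{lem:extr_is_diff_graph}, I would delete the edge $u_{i'}v_{j'}$ (remove cell $(i',j')$) and add the edge $u_{i'}v_{j}$ (insert cell $(i',j)$), or symmetrically operate on the rows — whichever direction makes the resulting degree sequence majorize the original one. The key point is that this operation preserves the number of edges $m$ and produces a new graph $G'$ whose degree sequence strictly majorizes that of $G$: one degree increases by one and another decreases by one in the direction that spreads the two values further apart. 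By Corollary~\ref{cor:majdeg_kar}, $h(G') > h(G)$, contradicting the assumption that $G$ maximizes $h$.

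The main technical care — and the place where the argument could go wrong if done carelessly — is verifying that the chosen swap genuinely yields a strictly majorizing degree sequence rather than merely a different one. One must check that moving a cell from a lower-degree row/column to a higher-degree one corresponds to increasing an already-large degree at the expense of an already-small one, which is exactly the configuration under which a single-unit transfer strictly majorizes (this is the discrete analogue of pushing two unequal values further apart). Because the ordering is by degree, the cell that is present but "out of place" sits in a column that is weakly heavier yet is missing in the heavier row, so the transfer is well-defined and strictly helps; I would state this comparison explicitly using the majorization definition, confirming the partial-sum inequalities hold with at least one strict. Once this is pinned down, strict concavity of $f$ (equivalently, strict convexity entering Karamata via Corollary~\ref{cor:majdeg_kar}) delivers the contradiction.

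I expect the only genuine obstacle to be bookkeeping: after the swap, the rows and columns may need to be re-sorted to restore the non-increasing ordering, and one must make sure the new tableau still corresponds to a legitimate bipartite graph (no repeated edges, correct cell occupancy). This is routine but must be handled so that the majorization claim refers to the correctly sorted degree sequences. With that settled, the claim follows: the tableau of an $h$-maximizer admits no such forbidden configuration, hence it is a Young tableau.
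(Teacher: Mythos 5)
Your overall strategy (a local edge-move that strictly increases $h$) is sound, but as written there are two genuine gaps. First, the swap you actually write down goes in the wrong direction. In your configuration, $(i,j)$ and $(i',j')$ are present, $(i',j)$ and $(i,j')$ absent, with $i<i'$ and $j>j'$, so $y_i \ge y_{i'}$ and $x_{j'} \ge x_j$. Your move ``remove $(i',j')$, add $(i',j)$'' transfers an edge from $v_{j'}$ (weakly larger degree) to $v_j$ (weakly smaller degree): if $x_{j'}=x_j+1$ the degree multiset is unchanged and $h$ does not increase, and if $x_{j'}\ge x_j+2$ the new sequence is strictly \emph{majorized by} the old one, so $h$ strictly \emph{decreases}. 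Only the ``symmetric'' move --- remove $(i',j')$, add $(i,j')$, i.e.\ transfer toward $u_i$ with $y_i \ge y_{i'}$, which is exactly the move of Lemma~\ref{lem:extr_is_diff_graph} --- always produces a strictly majorizing sequence; the hedge ``whichever direction works'' is not a proof, since you must exhibit a direction that always works, and it is this one. Second, your ``translation'' step is not a translation but a lemma. Failure of the Young property gives a present cell $(i,j)$ and an absent cell $(i',j')$ with $i'\le i$, $j'\le j$; it does not directly give the $2\times 2$ switch configuration (e.g.\ $(i',j)$ or $(i,j')$ may well be present). To extract that configuration you need an argument using the degree ordering: say $(i,j)\in T$, $(i,j')\notin T$ with $j'<j$; then $x_{j'}\ge x_j$ and $u_i\in N(v_j)\setminus N(v_{j'})$ force $N(v_{j'})\not\subseteq N(v_j)$, so some $u_k\in N(v_{j'})\setminus N(v_j)$ exists and $(i,j),(k,j')$ present, $(i,j'),(k,j)$ absent is the desired configuration. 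Without this step (or without invoking Lemma~\ref{lem:extr_is_diff_graph} together with a proof that a degree-sorted difference graph has a Young tableau, which is the same argument), the proof does not go through.

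It is worth noting how the paper sidesteps both issues: it relocates the offending present cell itself, setting $G'=G\setminus u_iv_j \cup u_{i'}v_{j'}$, i.e.\ it moves the cell $(i,j)$ into the vacant dominated position $(i',j')$. Then both affected column- and row-degrees move toward the weakly heavier lines automatically, and
\begin{align*}
h(G')-h(G)=\Delta(y_{i'}+1)+\Delta(x_{j'}+1)-\Delta(y_i)-\Delta(x_j)>0
\end{align*}
follows at once from $\Delta$ being strictly increasing and $y_{i'}\ge y_i$, $x_{j'}\ge x_j$ (with the degenerate cases $i'=i$ or $j'=j$ handled by dropping one pair of terms). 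No $2\times 2$ configuration, no majorization machinery, and no choice of direction is needed. Your argument can be repaired along the lines above, but the paper's one-move version is both shorter and self-contained, which is the stated point of Approach~2.
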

    \begin{claimproof}
        Assume $(i,j) \in T$ and $(i',j') \not\in T$.
        If $i'<i$ and $j'<j$, then $G'=G \backslash u_iv_j \cup u_{i'}v_{j'}$ satisfies 
        $h(G')=h(G)- \Delta(y_i)-\Delta(x_j)+\Delta(y_{i'}+1)+\Delta(x_{j'}+1)>h(G).$ The latter due to $\Delta$ being strictly increasing and $y_i \le y_{i'}$ and $x_j\le x_{j'}.$
        If $i'=i$ or $j'=j$, it is analogous as $h(G')-h(G)$ equals $\Delta(y_{i'}+1)-\Delta(y_i)>0$ or $\Delta(x_{j'}+1)-\Delta(x_j)>0.$
        This implies that $G$ was not extremal and so we conclude by contradiction.
    \end{claimproof}
    
    Next, we note that
    $$  \sum_{(i,j) \in T} x_j y_i \le \sum_{i,j} x_j y_i = \left(\sum_j x_j \right)\left(\sum_i y_i\right)=m^2.$$
    So by Jensen's inequality, we have 
    $$\sum_{(i,j) \in T} \log(x_j y_i) \le m \log\left(\frac{m^2}{m}\right)=m \log(m)$$ with equality if and only if $x_jy_i=m$ for every choice of $(i,j)$, i.e. $G$ is complete, bipartite.
\end{proof}

\section{Minimum entropy among dense Bipartite graphs}\label{sec:denseY}

We state two propositions in terms of the Young tableaux. Note that by taking $x=q+1$, we conclude that $B(n,m,y)$ is extremal in both cases.

\begin{prop}\label{prop:denseTxy}
    Fix integers $x>y>r>0$ and $m=xy-r$.
    Among all Young tableaux with $m$ cells in $[x]\times [y],$
    $h(T)$ is maximized by $$T'=\left([x]\times [y]\right) \backslash \left( x \times [y-r+1..y] \right).$$
    If $x>y=\omega(r^2)$, then the latter tableau has $h(T') \sim m \log m -r+o(1).$
\end{prop}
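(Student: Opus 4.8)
The plan is to split the statement into its two assertions --- the extremal claim (that $T'$ maximizes $h$) and the asymptotic evaluation of $h(T')$ --- and treat the extremal claim as the main content. Since $x,y$ are fixed, maximizing $h(T)$ over tableaux inside $[x]\times[y]$ is the same as minimizing the complementary quantity $\mathrm{Cost}(T):=xf(y)+yf(x)-h(T)$. I would work with the complement $S:=\left([x]\times[y]\right)\setminus T$, the set of $r=xy-m$ missing cells; the Young condition on $T$ forces $S$ to be a shape anchored at the corner $(x,y)$ (if $(i,j)\in S$ then $(i',j')\in S$ for all $i'\ge i,\ j'\ge j$). Telescoping $f(y)-f(y_i)=\sum\Delta(\cdot)$ over each column (and likewise over rows) collapses to the clean formula $\mathrm{Cost}(T)=\sum_{(i,j)\in S}\bigl(\Delta(i)+\Delta(j)\bigr)$, where $\Delta(t)=f(t)-f(t-1)$. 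So the whole problem becomes: among all corner-anchored shapes $S$ with $r$ cells, minimize $\sum_{(i,j)\in S}(\Delta(i)+\Delta(j))$, and the claim is that the minimizer is the single column $S^{*}=\{x\}\times[y-r+1..y]$, which is exactly the complement of $T'$.

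Because $\Delta$ is increasing, the cost decouples into a column part $\sum_{(i,j)\in S}\Delta(i)$ and a row part $\sum_{(i,j)\in S}\Delta(j)$, and the single column $S^{*}$ simultaneously \emph{maximizes} the column part (all its cells sit in the most expensive column $i=x$) and \emph{minimizes} the row part (it places exactly one cell in each of the top $r$ rows instead of stacking them). This is the source of the difficulty: concentrating the removed cells in one column is only advantageous because the ``spread penalty'' is then paid in the $i$-direction near $x$, where $\Delta$ is \emph{flat} (concave), rather than near $y$, where it is steeper since $y<x$. I would first make this precise on the two extreme shapes: comparing $S^{*}$ with the single row $S''=[x-r+1..x]\times\{y\}$ gives $\mathrm{Cost}(S^{*})-\mathrm{Cost}(S'')=\sum_{s=0}^{r-1}\bigl[(\Delta(x)-\Delta(x-s))-(\Delta(y)-\Delta(y-s))\bigr]\le 0$ by concavity of $\Delta$ together with $x>y$.

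For a general $S$ I would write $\mathrm{Cost}(S)-\mathrm{Cost}(S^{*})=(\text{row-gain})-(\text{column-loss})$, where both bracketed quantities are nonnegative (as $S^{*}$ minimizes the row part and maximizes the column part). Setting $\delta_t(c):=\Delta(t-c+1)-\Delta(t-c)$, concavity of $\Delta$ makes each $\delta_t$ increasing in $c$, while $x>y$ gives the crucial pointwise domination $\delta_y(c)\ge\delta_x(c)\ge0$. Expanding both parts in these increments and using $\delta_y\ge\delta_x$ bounds the row-gain from below by a corresponding $\delta_x$-weighted sum; one then applies summation by parts (legitimate because $\delta_x$ is monotone) to reduce the required inequality to tail sums of a purely combinatorial quantity measuring how the column- and row-index distributions of the corner-anchored shape $S$ differ from those of $S^{*}$. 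The hard part --- and the heart of the whole proposition --- is exactly this: one must show quantitatively that the row-gain, \emph{amplified} by $y<x$ through $\delta_y\ge\delta_x$, always dominates the column-loss, so $x>y$ is genuinely essential and the single-cell exchange moves one is tempted to use are not individually monotone. I expect the tail-sum/majorization inequality for corner-anchored shapes to be the main technical obstacle; strict concavity of $\Delta$ should upgrade the conclusion to uniqueness of $T'$ as maximizer.

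For the asymptotic statement I would substitute the degree data of $T'$ directly. Using the identity $xf(y)+yf(x)=xy(\log x+\log y)=f(xy)$ and $\mathrm{Cost}(T')=\bigl(f(y)-f(y-r)\bigr)+r\Delta(x)$ from the formula above, one gets $h(T')=f(xy)-\bigl(f(y)-f(y-r)\bigr)-r\Delta(x)$. Writing each difference via $f(b)-f(a)=(b-a)+\int_a^b\log t\,\mathrm{d}t$ and comparing with $f(m)=f(xy-r)=m\log m$, the first-order terms are $r\log(xy)$, $-r\log y$ and $-r\log x$, which cancel because $\log(xy)=\log x+\log y$, leaving the constant $-r$. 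The remaining error is the sum of the three second-order integral corrections; the dominant one comes from $f(y)-f(y-r)$ and is of order $\tfrac{r^{2}}{2y}$, while the others are $O\!\bigl(\tfrac{r^{2}}{xy}\bigr)$ and $O\!\bigl(\tfrac{r}{x}\bigr)$. All of these are $o(1)$ precisely when $y=\omega(r^{2})$ (and $x>y$), which yields $h(T')\sim m\log m-r+o(1)$. This part is routine once the estimates are organized so that the logarithmic leading terms are seen to telescope; the only place care is needed is tracking that the binding error term $\tfrac{r^{2}}{y}$ is exactly what the hypothesis $y=\omega(r^{2})$ controls.
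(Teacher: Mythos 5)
Your reduction is exactly the paper's: the identity $h(T)=xy\log(xy)-\sum_{(i,j)\in S}\left(\Delta(i)+\Delta(j)\right)$, where $S$ is the corner-anchored complement of $T$ in $[x]\times[y]$, so that maximizing $h$ means minimizing this cost over anchored shapes with $r$ cells. Your asymptotic computation for $h(T')$ is also correct and essentially identical to the paper's: the logarithmic leading terms cancel leaving $-r$, and the binding error term of order $r^2/y$ is exactly what $y=\omega(r^2)$ controls. However, the extremal claim --- the heart of the proposition --- is not actually proved in your proposal. You verify only the comparison between the two extreme shapes ($S^*$ versus the single row $S''$), then outline a row-gain/column-loss decomposition with summation by parts, and you yourself flag the resulting ``tail-sum/majorization inequality for corner-anchored shapes'' as the main technical obstacle, left unresolved. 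As written, this is a reduction plus a plan, not a proof.

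The paper closes precisely this gap with a per-cell comparison that makes any global amortization unnecessary. Order the $r$ missing cells $(i_1,j_1),\ldots,(i_r,j_r)$ of $S$ in reverse lexicographic order. Since $S$ is anchored at the corner, every cell $(i,j)$ with $i\ge i_k$ and $j\ge j_k$ belongs to $S$ and precedes or equals $(i_k,j_k)$ in this order, so $(x-i_k+1)(y-j_k+1)\le k$, which for positive integers forces $i_k+j_k\ge x+y+1-k$. Now, because $i_k\le x$ and $\Delta$ is increasing and concave, $\Delta(i_k)+\Delta(j_k)\ \ge\ \Delta(x)+\Delta(i_k+j_k-x)\ \ge\ \Delta(x)+\Delta(y+1-k)$, and the right-hand side is exactly the cost of the $k$-th cell of $S^*$. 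Summing over $k$ gives the inequality cell by cell, and the equality analysis ($(i_k,j_k)=(x,y+1-k)$ for all $k$) yields uniqueness of $T'$. In other words, the tension you correctly identified --- the single column maximizes the column part while minimizing the row part --- dissolves once you match the $k$-th missing cell of $S$ (in reverse lex order) against the $k$-th cell of $S^*$: the anchored structure lower-bounds the coordinate sum, and concavity of $\Delta$ says the maximally spread pair $(x,\,y+1-k)$ is the cheapest pair with that sum and coordinates at most $x$. Your summation-by-parts route might well be completable, but until that tail-sum inequality is established your argument does not prove the proposition.
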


\begin{proof}
	    Let $T$ be a Young tableau which equals $[x]\times [y],$ except from the $r$ cells which have coordinates $(i_1,j_1),(i_2,j_2), \ldots, (i_r,j_r)$, where these are ordered in reverse lexicographic order. In particular $(i_1,j_1)=(x,y)$ and for every $1 \le k \le r$ the pair $(i_k,j_k)$ satisfies $i_k+j_k \ge x+y+1-k$, $j_k \le y$ and $i_k \le x.$
    	Since $\Delta$ is increasing and concave, by the inequality of Karamata, we have
    	$$ \Delta(i_k) + \Delta(j_k) \ge \Delta(x)+\Delta(y+1-k).$$
    	Now 
    	\begin{align*}
    	    h(T)&=xy \log(xy) - \sum_{k=1}^{r} \left( \Delta(i_k) + \Delta(j_k) \right)\\
    	    &\le xy \log(xy) - \sum_{k=1}^{r} \left( \Delta(x)+\Delta(y+1-k) \right)\\
    	    &= h(T'),
    	\end{align*}
    	and furthermore equality occurs if and only if $(i_k,j_k)=(x,y+1-k)$ for every $k$, i.e. $T=T'$.
    	Now since $1+\log(x)-\frac 1{x-1} \le \Delta(x) < 1+\log(x)$ and $1+\log(y) - \frac k{y-k})\le \Delta(y+1-k) < 1+\log(y)$, we note that when $y=\omega(r^2),$ we have
    	\begin{align*}
    	    h(T')&=xy \log(xy) - r \Delta(x)  -\sum_{k=1}^{r} \Delta(y+1-k)\\
    	    &= m\log(m+r)  -2r -o(1)\\
    	    &= m\log(m)-r-o(1),
    	\end{align*}
    	where the $o(1)$ term tends to zero as $x,y \to \infty$ for fixed $r$.
\end{proof}

\begin{prop}\label{prop:denseTbq}
	Fix integers $q\ge y>r>0$ and $m=qy+r$.
	Among all Young tableaux containing $[q]\times [y]$ with $m$ cells,
	$h(T)$ is maximized by $$T'=\left([q]\times [y]\right) \cup \left( (q+1) \times [r] \right).$$
	If $q=\omega(r)$, the latter tableau satisfies $h(T') \sim m\log (m)-r \log\left( \frac{y}{r} \right).$
\end{prop}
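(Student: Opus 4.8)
The plan is to write $h(T)$ as the contribution of the base rectangle plus one local term per cell outside it, and then to show that these $r$ extra cells are best concentrated in the single column $q+1$. First I would record a per-cell formula. Since each row and each column of a Young tableau is an initial segment, row $j$ has length $q$ plus the number of its cells in columns $i>q$ (when $j\le y$), and telescoping $f$ along every row and every column gives
\[
  h(T)=qy\log(qy)+\!\!\sum_{(i,j)\in T\setminus([q]\times[y])}\!\!\bigl(\Delta(i)+\Delta(j)\bigr),
\]
because the cell $(i,j)$ contributes $\Delta(i)$ to the telescoping of its row and $\Delta(j)$ to that of its column, while $qy\log(qy)=h([q]\times[y])$. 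As $r<y\le q$, no extra cell can have both $i>q$ and $j>y$ (that would force all of column $q+1$ up to row $y$ to be present, i.e. at least $y>r$ extra cells). Hence the extra cells split into a \emph{row strip} ($i>q,\ j\le y$), forming a Young shape $\lambda$, and a \emph{column strip} ($i\le q,\ j>y$), forming a Young shape $\mu$, with $|\lambda|+|\mu|=r$; any such pair is realizable, and the sum above splits accordingly, so I can optimise $\lambda$ and $\mu$ independently subject to $|\lambda|+|\mu|=r$.

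The heart of the argument is that, among row strips with a fixed number of cells, the single column is optimal. Writing $b_1\ge b_2\ge\cdots$ for the heights of columns $q+1,q+2,\dots$ and $w$ for the number of nonempty ones, the row-strip contribution equals $\sum_s\bigl(f(b_s)+b_s\Delta(q+s)\bigr)$. I would use the collapse move that stacks the whole last column onto the first, i.e. $(b_1,\dots,b_w)\mapsto(b_1+b_w,b_2,\dots,b_{w-1},0)$; this stays a legal shape since $b_1+b_w\le r<y$. Its effect on the contribution is
\[
  \bigl(f(b_1+b_w)-f(b_1)-f(b_w)\bigr)-b_w\bigl(\Delta(q+w)-\Delta(q+1)\bigr).
\]
The hard part is that the two terms pull in opposite directions: superadditivity favours concentration, while the increasing $\Delta$ rewards the larger offsets one gets by spreading out. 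The point is that $r<q$ makes the second effect negligible: since $\Delta$ is concave, $\Delta(q+w)-\Delta(q+1)<(w-1)/q\le(r-1)/q<1$, whereas Karamata's inequality (Theorem~\ref{thr:Kar}, giving superadditivity of $f$) yields $f(b_1+b_w)-f(b_1)-f(b_w)\ge f(2b_w)-2f(b_w)=2b_w\log 2>b_w$. Hence the move strictly increases the contribution whenever $w\ge2$, and iterating reaches the single column, of value $f(A)+A\Delta(q+1)$ for $A=|\lambda|$.

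By the transposed statement the column strip is likewise best taken as a single row, of value $f(C)+C\Delta(y+1)$ with $C=|\mu|$. It then remains to choose the split, i.e. to maximise $g(A)=f(A)+A\Delta(q+1)+f(r-A)+(r-A)\Delta(y+1)$ over $0\le A\le r$. Here $g(r)-g(A)=\bigl(f(r)-f(A)-f(r-A)\bigr)+(r-A)\bigl(\Delta(q+1)-\Delta(y+1)\bigr)$, and both brackets are nonnegative—the first by superadditivity, the second because $q\ge y$ and $\Delta$ is increasing—so $A=r$ is optimal. This forces all cells into the row strip as one column, which is exactly $T'$, and gives $h(T')=qy\log(qy)+f(r)+r\Delta(q+1)$.

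Finally, for the asymptotics I would expand this closed form. Using $\Delta(q+1)=1+\log q+O(1/q)$ and $f(r)=r\log r$, and $m=qy+r$ with $qy=\omega(r^2)$ (which follows from $q=\omega(r)$ and $y>r$), the expansion $m\log m=qy\log(qy)+r\log(qy)+r+o(1)$ yields $h(T')-m\log m=r\log r-r\log y+o(1)$, that is $h(T')\sim m\log m-r\log(y/r)$.
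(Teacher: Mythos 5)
Your proof is correct, and while it starts from the same telescoping identity as the paper's proof, namely $h(T)=qy\log(qy)+\sum_{(i,j)\in T\setminus([q]\times[y])}\bigl(\Delta(i)+\Delta(j)\bigr)$, the way you maximize that sum is genuinely different. The paper orders the $r$ extra cells lexicographically, notes that the $k$-th cell satisfies $i_k+j_k\le q+k+1$ and $\min\{i_k,j_k\}\le k$, and bounds each summand in one shot, $\Delta(i_k)+\Delta(j_k)\le\Delta(q+1)+\Delta(k)$, by monotonicity of $\Delta$ followed by Karamata's inequality for the concave $\Delta$; summing over $k$ gives $h(T)\le h(T')$ immediately, with the equality analysis as a byproduct. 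You instead split the extra cells into a row strip and a column strip and run an iterative exchange argument: your collapse move gains a superadditivity term $f(b_1+b_w)-f(b_1)-f(b_w)\ge 2b_w\log 2$ that beats the concavity loss $b_w\bigl(\Delta(q+w)-\Delta(q+1)\bigr)<b_w(r-1)/q<b_w$, and a final comparison shows all $r$ cells should go to the taller side ($A=r$). Your route is longer but makes explicit the quantitative tension between concentration (convexity of $f$) and spreading (monotonicity of $\Delta$), and it shows precisely where the hypothesis $r<y\le q$ enters; the paper's per-cell bound is shorter and delivers uniqueness of the maximizer at the same time, whereas your argument gives uniqueness only when $q>y$ (when $q=y$ both $T'$ and its transpose are optimal, an edge case the paper's equality claim also glosses over). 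Both proofs arrive at the same closed form $h(T')=qy\log(qy)+f(r)+r\Delta(q+1)$, and your asymptotic expansion matches the paper's, with the slight refinement that you track the error uniformly in $r$ under $q=\omega(r)$ rather than for fixed $r$.
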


\begin{proof}
	Let $T$ be a Young tableau which equals $[q]\times [y],$ with $r$ additional cells which have coordinates $(i_1,j_1),(i_2,j_2), \ldots, (i_r,j_r)$, where these are ordered in lexicographic order. We note that for every $1 \le k \le r$ the pair $(i_k,j_k)$ satisfies $i_k+j_k \le q+k+1$ and $\min\{i_k,j_k\} \le k.$
    Since $\Delta$ is increasing and concave, by the inequality of Karamata, we have
    $$ \Delta(i_k) + \Delta(j_k) \le \Delta(\min\{i_k,j_k\})+\Delta(q+1+k-\min\{i_k,j_k\}) \le \Delta(q+1)+\Delta(k).$$
    	Now     	\begin{align*}
    	    h(T)&=yq \log(yq) + \sum_{k=1}^{r} \left( \Delta(i_k) + \Delta(j_k) \right)\\
    	    &\le yq \log(yq) + \sum_{k=1}^{r} \left( \Delta(q+1)+\Delta(k) \right)\\
    	    &= h(T'),
    	\end{align*}
    	and furthermore equality occurs if and only if $(i_k,j_k)=(q+1,k)$ for every $k$, i.e. $T=T'$.
    	
    	Now since $1+\log(q)\le \Delta(q+1) < 1+\log(q)+\frac 1q$ and $\sum_{k=1}^r \Delta(k)=f(r)=r \log(r)$, we note that when $q=\omega(r),$ we have
    	\begin{align*}
    	    h(T')&=yq \log(yq) + r \Delta(q+1)  +f(r)\\
    	    &=qy \log(q)+ qy\log(y)+r \log(q) +r+f(r) +o(1) \\
    	    &= m\log(m-r)-r \log(y)  +r+r \log(r) +o(1)\\
    	    &= m\log(m)-r\log\left( \frac yr \right)+o(1),
    	\end{align*}
    	where the $o(1)$ term tends to zero when $q \to \infty$ for fixed $r.$
\end{proof}

\section{Minimum is attained by dense Young tableaux}\label{sec:proofYdense}

In this subsection, we list and prove a number of lemmas that show that the extremal Young tableaux are dense, in the sense that they are of the form presented in Section~\ref{sec:denseY}.

We start with proving that the extremal Young tableau cannot have the following specific form.

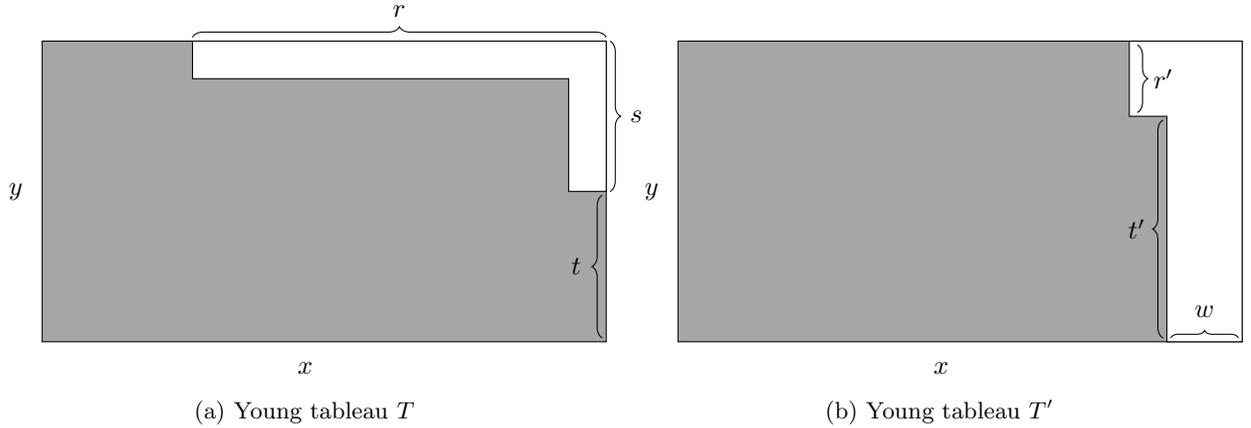
\begin{figure}[h]
\begin{minipage}[b]{.49\linewidth}
\begin{center}
\begin{tikzpicture}
\draw [fill = medium-gray](0,0) -- (7.5,0) -- (7.5,4) --(0,4) -- cycle;
\draw [decorate,decoration={brace,amplitude=4pt},xshift=0pt,yshift=0pt] (2,4.05) -- (7.5,4.05) node [black,midway,yshift=0.35cm]{$r$};
\draw [decorate,decoration={brace,amplitude=4pt},xshift=0pt,yshift=0pt] (7.55,4.) -- (7.55,2) node [black,midway,xshift=0.35cm]{$s$};
\draw [decorate,decoration={brace,amplitude=4pt},xshift=0pt,yshift=0pt] (7.45,0.05) -- (7.45,1.95) node [black,midway,xshift=-0.35cm]{$t$};
\draw (3.5,-0.35) node { $x$};
\draw (-.35,2) node {$y$};
\draw [fill = white] (2,3.5) -- (7,3.5) -- (7,2) -- (7.5,2)--(7.5,4)-- (2,4) -- cycle;
\end{tikzpicture}
\subcaption{Young tableau $T$}
\label{fig:T}
\end{center}
\end{minipage}\quad
\begin{minipage}[b]{.49\linewidth}
\begin{center}
\begin{tikzpicture}
\draw [fill = medium-gray](0,0) -- (7.5,0) -- (7.5,4) --(0,4) -- cycle;

\draw (3.5,-0.35) node { $x$};
\draw (-.35,2) node {$y$};
\draw [fill = white] (6.5,0) -- (7.5,0) -- (7.5,4) -- (6,4)--(6,3)--(6.5,3) -- cycle;
\draw [decorate,decoration={brace,amplitude=4pt},xshift=3pt,yshift=0pt] (6.,3.95) -- (6.,3.05) node [black,midway,xshift=0.35cm]{$r'$};
\draw [decorate,decoration={brace,amplitude=4pt},xshift=0pt,yshift=0pt] (6.45,0.05) -- (6.45,2.95) node [black,midway,xshift=-0.35cm]{$t'$};
\draw [decorate,decoration={brace,amplitude=0.1cm},xshift=0.cm,yshift=0.pt] (6.55,0.05) -- (7.45,0.05) node [black,midway,yshift=0.35cm]{$w$}; 
\end{tikzpicture}
\subcaption{Young tableau $T'$}
\label{fig:T'}
\end{center}
\end{minipage}
\caption{The Young tableaux from 
Lemma~\ref{lem:crucialcomparison}}\label{fig:Ywithdeletedhook}
\end{figure}

\begin{lem}\label{lem:crucialcomparison}
    Let $m=xy-r-s+1$ with $1\le r\le x-y$ and $1\le s<y$, such that $r+s-1=wy+r'$ for some integers $0<w$ and $0\le r' <y.$
    If
    \begin{align*}
        T&=\left([x]\times [y]\right) \backslash \left( [x-r+1..x] \times y \right) \backslash \left( x \times [y-s+1..y-1] \right) \mbox{ and}\\
        T'&=\left([x-w]\times [y]\right) \backslash \left( (x-w) \times [y-r'+1..y] \right)
        ,
    \end{align*}
    then
    $h(T) < h(T').$ See Figure~\ref{fig:Ywithdeletedhook}.
\end{lem}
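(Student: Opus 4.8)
The plan is to turn the comparison of $h(T)$ and $h(T')$ into a comparison of the total \emph{weight of the deleted cells}, and then to extract the sign from the concavity of $\Delta$. The starting point is a cell-weight identity for Young tableaux. Since $f(k)=\sum_{\ell=1}^{k}\Delta(\ell)$ and in a Young tableau column $i$ occupies exactly rows $1,\dots,y_i$ (and dually for rows), one has
\[
h(S)=\sum_{i}f(y_i)+\sum_{j}f(x_j)=\sum_{(i,j)\in S}\bigl(\Delta(i)+\Delta(j)\bigr),
\]
so the weight $\Delta(i)+\Delta(j)$ attached to a cell depends only on its position. In particular $h([x]\times[y])=xy\log(xy)$, and for any Young subtableau $S\subseteq[x]\times[y]$ the identity is additive over cells, giving $h([x]\times[y])-h(S)=W\bigl([x]\times[y]\setminus S\bigr)$ where $W(X):=\sum_{(i,j)\in X}\bigl(\Delta(i)+\Delta(j)\bigr)$.

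Next I would observe that both $T$ and $T'$ arise from the full rectangle $[x]\times[y]$ by deleting the \emph{same} number $N=r+s-1=wy+r'$ of cells: $T$ deletes the corner hook $A\cup B$ (the rightmost $r$ cells of the top row together with the top $s-1$ cells of the last column), while $T'$ deletes the block $D\cup C$ (the $w$ rightmost full columns together with the top $r'$ cells of column $x-w$); see Figure~\ref{fig:Ywithdeletedhook}. Consequently
\[
h(T')-h(T)=W(A\cup B)-W(D\cup C),
\]
and it suffices to show that the hook is strictly heavier than the block, $W(A\cup B)>W(D\cup C)$. To compare the two patterns I would parametrise any admissible (co-Young) deletion of $N$ cells by its column-deletion heights $0\le c_1\le\cdots\le c_x\le y$ with $\sum_i c_i=N$, so that
\[
W(c)=\sum_{i=1}^{x}c_i\,\Delta(i)+\sum_{i=1}^{x}\psi(c_i),\qquad \psi(c):=\sum_{j=y-c+1}^{y}\Delta(j),
\]
with the hook given by $c=(0^{\,x-r},1^{\,r-1},s)$ and the block by $c=(0^{\,x-w-1},r',y^{\,w})$.

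The key structural point is that, because $x>y$ (forced by $r\le x-y$), it is cheaper to delete cells by \emph{deepening the rightmost columns} than by spreading shallow deletions across many columns. Exchanging a deletion in a far-left shallow column $i$ for a deeper deletion in a right column $i'>i$ changes $W$ by
\[
\bigl(\Delta(i')-\Delta(i)\bigr)-\bigl(\Delta(y-c_i+1)-\Delta(y-c_{i'})\bigr),
\]
whose sign is controlled by the concavity of $\Delta$ (its increments decrease, Theorem~\ref{thr:Kar}), exactly the mechanism behind Proposition~\ref{prop:denseTxy}. Arranging these exchanges shows that the block $D\cup C$ minimises $W$ among all co-Young deletions of $N$ cells — the same phenomenon that makes the single-column deletion extremal in Proposition~\ref{prop:denseTxy}, now pushed past one full column. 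Since the hook $A\cup B$ is a genuinely different such deletion, this yields the strict inequality $W(A\cup B)>W(D\cup C)$, hence $h(T)<h(T')$.

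The main obstacle is precisely this last comparison. It is delicate because the two deletion weights agree to leading order ($W(A\cup B),W(D\cup C)\approx N\log(xy)$, reflecting that both $T$ and $T'$ are near-extremal in the sense of Section~\ref{sec:denseY}); the strict sign is a second-order effect decided entirely by the concavity correction, so no crude estimate suffices, and a naive term-by-term (reverse-lexicographic) matching of the deleted cells fails. One must therefore argue globally — either through the exchange/smoothing argument above, carefully respecting the monotonicity constraint $c_1\le\cdots\le c_x$, or by noting that $W(c)$ is linear plus the concave term $\sum_i\psi(c_i)$ and hence attains its minimum over the order polytope $\{c:\ \sum_i c_i=N,\ 0\le c_i\le y,\ c_1\le\cdots\le c_x\}$ at the vertex corresponding to $D\cup C$. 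The hypothesis $x>y$ enters exactly here, through the direction of the concavity inequality for $\Delta$.
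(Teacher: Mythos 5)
Your reformulation is sound: the cell-weight identity $h(S)=\sum_{(i,j)\in S}\bigl(\Delta(i)+\Delta(j)\bigr)$ is correct, both $T$ and $T'$ sit inside $[x]\times[y]$ with the same number $N=r+s-1=wy+r'$ of deleted cells, and the lemma is indeed equivalent to $W(A\cup B)>W(D\cup C)$. The problem is that this comparison, which is the entire content of the lemma, is never actually proved, and neither of the two routes you offer can be completed as stated. The polytope route is fallacious: concavity of $W$ only gives that the minimum is attained at \emph{some} vertex of $\{0\le c_1\le\cdots\le c_x\le y,\ \sum_i c_i=N\}$, and this polytope has many vertices, namely all monotone vectors of the form $(0^{a},v^{b},y^{c})$ with $bv=N-cy$ and $0<v<y$; in particular, when $s=1$ the hook $(0^{x-r},1^{r})$ is \emph{itself} a vertex. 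So ``concave, hence minimized at a vertex'' cannot single out the block; comparing these two vertices is exactly the lemma, and the argument is circular.

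The exchange route also breaks, because individual cell moves toward the block can increase $W$ even though $x>y$. Take $y=10$, $x=100$, $r=90$, $s=1$ (so $w=9$, $r'=0$): the hook deletes the top cell of each of columns $11,\dots,100$, while the block deletes columns $92,\dots,100$ entirely. Moving one deleted cell from column $11$ to deepen column $100$ changes $W$ by $\bigl(\Delta(100)-\Delta(11)\bigr)-\bigl(\Delta(10)-\Delta(9)\bigr)\approx\log(100/11)-\log(10/9)>0$, i.e.\ the very step your smoothing procedure requires goes the wrong way. The inequality is a global effect that pointwise concavity comparisons cannot capture, which is why the paper's proof is structured differently: it first reduces to $s=1$, then (by deleting initial columns and a majorization argument) to $r=x-y$, and finally writes $x=ay+b$ and inducts on $a$, where the increments $I_1$ and $I_2$ of $h(T)$ and $h(T')$ are shown to be \emph{exactly equal} via an integral identity, with the strict inequality coming from the base case $a=1$ (Proposition~\ref{prop:denseTxy}). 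Your write-up honestly flags the key comparison as ``the main obstacle,'' but flagging it is not closing it: as it stands, the proof has a genuine gap at its central step.
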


\begin{proof}
    We prove this in a number of steps. First we prove that it is sufficient to prove when $s=1.$
    \begin{claim}
        If Lemma~\ref{lem:crucialcomparison} does hold whenever $s=1$, then so it does for all other cases.
    \end{claim}
    \begin{claimproof}
        Assume there are choices of $x,y,r,s$ (and thus Young tableaux $T$ and $T'$) for which Lemma~\ref{lem:crucialcomparison} is false.
        Let $t=y-s$ and $t'=y-r'$, and assume $0<t<y-1.$
        We now consider two cases.\\
        \textbf{Case 1: $t' \le t$.}
        In this case, we note that $\Delta(t+1)+\Delta(x)>\Delta(t'+1)+\Delta(x-w)$ and thus we also have a counterexample with $s-1$ instead of $s$.
        We can repeat this until $s=1.$\\
        \textbf{Case 2: $t' > t$.}
        Note that $y\cdot (x-w)>m>x\cdot (y-1)$ implying $\frac{x-w}{x}>\frac{y-1}{y} > \frac{t}{t+1} \ge \frac{t}{t'}$ and hence $(x-w)t'>xt.$
        Now we have $\Delta(t)+\Delta(x)<\Delta(t')+\Delta(x-w)$.
        When $t+x \le (x-w)+t'$, this is by Karamata's inequality and the fact that $\Delta$ is increasing and concave.
        When $t+x> (x-w)+t'$, we note that
        $(x-u)(t-u)=xt-u(x+t)+u^2< (x-w)t'-u((x-w)+t')+u^2=  (x-w-u)(t'-u)$ for every $u \ge 0$ and thus
        $$\int_{u=0}^{1} \log\left( (x-u)(t-u) \right) \d u < \int_{u=0}^{1} \log\left( (x-w-u)(t'-u) \right) \d u.$$
        The latter being equivalent to $\Delta(t)+\Delta(x)<\Delta(t')+\Delta(x-w)$.
        This implies that we can construct a counterexample with $s+1$ instead of $s$ as well and we can repeat this until $s=y,$
        which corresponds to an example with $s=1.$
    \end{claimproof}
    Next, we show that we can assume that $x-r=y.$
    
    \begin{claim}\label{clm:rmax}
        If Lemma~\ref{lem:crucialcomparison} does hold whenever $s=1$ and $r=x-y$, then so it does for all other cases.
    \end{claim}
    \begin{claimproof}
        Assume there are Young tableaux $T$ and $T'$ for which Lemma~\ref{lem:crucialcomparison} is false, where $s=1$ and $y+r<x.$
        Now deleting the first column of both tableaux, implies that 
        $h(T)$ decreases by $\Delta(x-r)+(y-1)\Delta(x)+f(y)$ and $h(T')$ has been decreased by $(y-r')\Delta(x-w)+r'\Delta(x-w-1)+f(y).$
        Since $\{\underbrace{x,x,\ldots, x}_{y-1},x-r\}$ majorizes $\{\underbrace{x-w,x-w,\ldots,x-w}_{y-r'},\underbrace{x-w-1,\ldots x-w-1}_{r'}\}$, as $\Delta$ is strictly concave, the value $h(T')$ has decreased by a larger amount than $h(T).$
        So Lemma~\ref{lem:crucialcomparison} is false for a construction with parameters $(x-1,y,r).$
        We can repeat this, until $x=y+r.$
    \end{claimproof}

Now we finish the proof by proving that Lemma~\ref{lem:crucialcomparison} is true when $s=1$ and $r=x-y.$
In the latter case, we can write $x=ay+b$ where $a>0$ is an integer and $0 \le b <y$, in which case we know the precise shapes of $T$ and $T'$. These have been represented in Figure~\ref{fig:Yremainingcase}.

\begin{figure}[h]
\begin{minipage}[b]{.49\linewidth}
\begin{center}
\begin{tikzpicture}
\draw [fill = medium-gray](0,0) -- (7.5,0) -- (7.5,2.5) --(3,2.5) --(3,3)--(0,3)-- cycle;
\draw [decorate,decoration={brace,amplitude=4pt},xshift=0pt,yshift=0pt] (0.02,3.05) -- (2.98,3.05) node [black,midway,yshift=0.35cm]{$y$};
\draw (3.75,-0.35) node { $ay+b$};
\draw (-.35,2) node {$y$};
\end{tikzpicture}
\subcaption{Young tableau $T$}
\end{center}
\end{minipage}
\quad
\begin{minipage}[b]{.49\linewidth}
\begin{center}
\begin{tikzpicture}
\draw [fill = medium-gray](0,0) -- (7,0) -- (7,1.5) --(6.5,1.5) --(6.5,3)--(0,3)-- cycle;
\draw (3.5,-0.35) node { $a(y-1)+b+1$};
\draw (-.35,2) node {$y$};
\draw [decorate,decoration={brace,amplitude=4pt},xshift=0pt,yshift=0pt] (6.55,2.95)--(6.55,1.55)  node [black,midway,xshift=0.35cm]{$b$};
\end{tikzpicture}
\subcaption{Young tableau $T'$}
\end{center}
\end{minipage}
\caption{The two Young tableaux in the remaining case }\label{fig:Yremainingcase}
\end{figure}
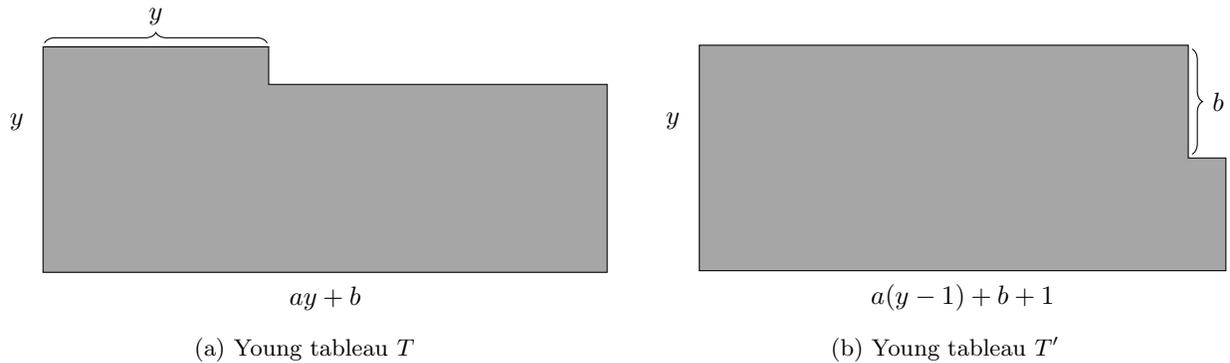

When $a=1$, we have $h(T)\le h(T')$ by Proposition~\ref{prop:denseTxy} and the inequality is even strict when $b>1.$
We now finish the proof by induction on $a$. So assume it is proven for the values $a,b,y.$
Going from $a$ to $a+1$, the value $h(T)$ increases by 
\begin{equation}\label{eq:T_a_a+1}
    I_1=y \cdot f(y-1) + (y-1) \left( f((a+1)y+b)-f(ay+b) \right)
\end{equation}
while $h(T')$ increases by at least
\begin{equation}\label{eq:T'_a_a+1}
    I_2= (y-1) \cdot f(y)+ y \left( f\left(\left(a+1+\frac{b}{y}\right)(y-1)\right)-f\left(\left(a+\frac{b}{y}\right)(y-1)\right) \right).\\
\end{equation}
Now one can note that $I_1=I_2$ by implementing the following two equalities in Equation~\eqref{eq:T_a_a+1} and Equation~\eqref{eq:T'_a_a+1}.
\begin{align*}
f((a+1)y+b)-f(ay+b)&= y +y \int_{u=a+\frac by}^{a+1+\frac by} \left(\log(u)+\log(y)\right) \d u\\
\end{align*}
\begin{align*}
f\left(\left(a+1+\frac{b}{y}\right)(y-1)\right)-f\left(\left(a+\frac{b}{y}\right)(y-1)\right)
&= y-1+(y-1)\int_{u=a+\frac by}^{a+1+\frac by} \left(\log(u)+\log(y-1)\right) \d u.
\end{align*}
\end{proof}

Having proven this particular case in Lemma~\ref{lem:crucialcomparison}, we continue with some observations from which we can conclude that the extremal tableaux will be as given in Section~\ref{sec:denseY}. 

\begin{lem}\label{lem:imbalance}
    If $G=(U \cup V,E)$ is an extremal bipartite $(n,m)$-graph, then the minimum degree of the smaller partition class is at least the maximum degree of the other partition class.
\end{lem}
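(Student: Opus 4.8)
The plan is to reduce the statement to a clean combinatorial inequality about the Young tableau and then argue by contradiction. First I note that the swap used in the proof of Lemma~\ref{lem:extr_is_diff_graph} never creates a new vertex, so it also applies under the order bound; hence an extremal $(n,m)$-graph is again a difference graph, and by the Claim in Subsection~\ref{subsec:approach2} its associated tableau $T$ is a Young tableau. Transposing $T$ if necessary (this preserves $h$, $m$ and the order $x+y$), I may assume $x \ge y$, so that $V$ is the smaller class. Since $u_1$ dominates $V$ we have $\max_{u \in U} \deg u = y_1 = y$, while $\min_{v \in V}\deg v = x_y$, so the statement becomes the assertion that $x_y \ge y$; equivalently, the first $y$ columns of $T$ are complete, i.e.\ $T$ contains the full square $[y]\times[y]$. (The degenerate case $x=y$, where both classes are ``smaller'', I would record separately, as it forces all degrees to coincide.)

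Next I would suppose for contradiction that $x_y = c < y$, so that exactly $c$ columns of $T$ are complete and the ``tail'' columns $c+1,\dots,x$ carry the $m-cy$ cells $\sum_{i>c} y_i$, each of height at most $y-1$. The goal is then to produce a competitor tableau $T^\star$ with the same number of cells and no larger order for which $h(T^\star) > h(T)$, contradicting extremality via Corollary~\ref{cor:majdeg_kar}. The natural move is to redistribute the tail into a taller, narrower shape: peeling the tail off and re-attaching its cells below the complete $c\times y$ block deepens the complete columns and thereby spreads out the $U$-degree sequence while balancing the $V$-degree sequence. To compare $h(T)$ and $h(T^\star)$ I would split $h = \sum_i f(y_i) + \sum_j f(x_j)$ into its $U$-part and $V$-part and apply Karamata (Theorem~\ref{thr:Kar}) to the increasing, concave function $\Delta$, pairing each freed tail cell with the deep cell that replaces it, exactly in the spirit of Propositions~\ref{prop:denseTxy}--\ref{prop:denseTbq} and Lemma~\ref{lem:crucialcomparison}.

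The step I expect to be the main obstacle is showing that such a redistribution \emph{strictly} increases $h$. Because the column-degree sequence and the row-degree sequence of a tableau are conjugate partitions, any rearrangement that makes one of them more spread out simultaneously makes the other more balanced; consequently a single application of Karamata, or of Corollary~\ref{cor:majdeg_kar} to one side only, cannot settle the sign, and one must genuinely weigh the two competing contributions against each other. This is most delicate in the unbalanced regime $x \gg y$ (a nearly complete column together with many very short columns), where I expect that \emph{no} single edge-swap improves $h$ at all — every cheap edge to delete is incident to the dominating vertex $v_1$, whose large degree makes its removal too costly — so the improving step must be global, and the constraint $x+y \le n$ genuinely limits how the tail may be re-attached. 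There I would instead feed the near-rectangular intermediate shapes into Lemma~\ref{lem:crucialcomparison}, using it as the engine that turns a ``rectangle-with-a-notch'' into a strictly better ``narrower rectangle with a partial last column'', or directly bound $h(T)$ against the value $h(T')$ of the dense tableaux estimated in Section~\ref{sec:denseY}, iterating until the full square $[y]\times[y]$ is forced.
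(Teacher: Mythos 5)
Your reduction is sound: extremality is preserved under the order bound, the associated tableau of an extremal graph is a Young tableau, and after transposing the lemma is exactly the claim that $T$ contains the square $[y]\times[y]$. But the proof stops where the actual work begins. You never specify a competitor tableau and never establish the strict inequality $h(T^\star)>h(T)$; you explicitly defer this as ``the main obstacle'', and none of your fallback routes closes it. Re-attaching the tail below the $c\times y$ block fails in precisely the regime you single out: when $x\gg y$ and $c$ is small, the tail has up to $(x-c)(y-1)$ cells, and re-attaching them as rows of length at most $c$ creates many more new rows than the $x-c$ columns you delete, so the order bound $x+y\le n$ is violated. Iterating Lemma~\ref{lem:crucialcomparison} is not available either: that lemma compares two tableaux of the special ``rectangle minus a hook'' shape, which a general tableau with $x_y<y$ need not have; in the paper it is applied only in Theorem~\ref{thr:finalstep_Bnmy}, \emph{after} the present lemma has already forced $[y]\times[y]\subseteq T_{*}$, so invoking it here is close to circular. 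Finally, Propositions~\ref{prop:denseTxy} and~\ref{prop:denseTbq} are asymptotic statements, with $o(1)$ errors under $\omega(\cdot)$ hypotheses, and cannot produce an exact inequality valid for every pair $(n,m)$.

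The idea you are missing is to cut along the Durfee square rather than along the complete columns. Let $i$ be maximal with $(i,i)\in T$, let $S_1$ be the set of cells in rows beyond $i$ and $S_2$ the set of cells in columns beyond $i$. If both are nonempty, delete the rows of $S_1$ and glue them back, transposed, as new columns on the right. This preserves the size \emph{and} the order exactly, since each deleted nonempty row becomes one new nonempty column. Moreover, the degrees outside $\{u_1,\dots,u_i,v_1,\dots,v_i\}$ are unchanged as a multiset, while, writing $a_j$ for the number of cells of $S_1$ in column $j\le i$, each old pair $(\deg u_j,\deg v_j)=(i+a_j,\,x_j)$ becomes $(i,\,x_j+a_j)$. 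Since $x_j\ge i$, the new full degree sequence majorizes the old one, strictly because $S_2\neq\emptyset$ gives $x_1>i$ and $S_1\neq\emptyset$ gives $a_1\ge 1$. A single application of Corollary~\ref{cor:majdeg_kar} then contradicts extremality, so $S_1=\emptyset$ or $S_2=\emptyset$, which is the lemma. Your conjugate-partition worry — that spreading out one side balances the other — is legitimate for one-sided comparisons, but this particular move makes the \emph{combined} row-plus-column degree sequence majorize, which is all that Corollary~\ref{cor:majdeg_kar} needs; your claim that no single Karamata application can settle the sign is exactly what the paper's choice of move refutes.
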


\begin{proof}
    Consider the associated Young tableau $T$ of $G$.
    Let $i$ be the value for which $(i,i) \in T$ and $(i+1,i+1) \not \in T$.
    Let $S_1=\{(i',j') \in T \mid j' >i \}$ and $S_2=\{(i',j') \in T \mid i' >i \}.$
    Now if both $S_1$ and $S_2$ are non-empty, we can construct a tableau $T'$ for which the rows in $S_1$ are deleted and are added as columns, in such a way that another Young tableau $T'$ has been formed.
    It is not hard to see that the degree sequence of the graph $G'$ associated with $T'$ majorizes the degree sequence of $G$. For this, note that is sufficient to compare the degrees of $\{u_1, \ldots, u_i,v_1, \ldots, v_i\}$ (and thus the number of cells in their rows/columns).
    Thus, since $f$ is strictly convex, by Karamata's inequality we know $h(T')>h(T).$
\end{proof}

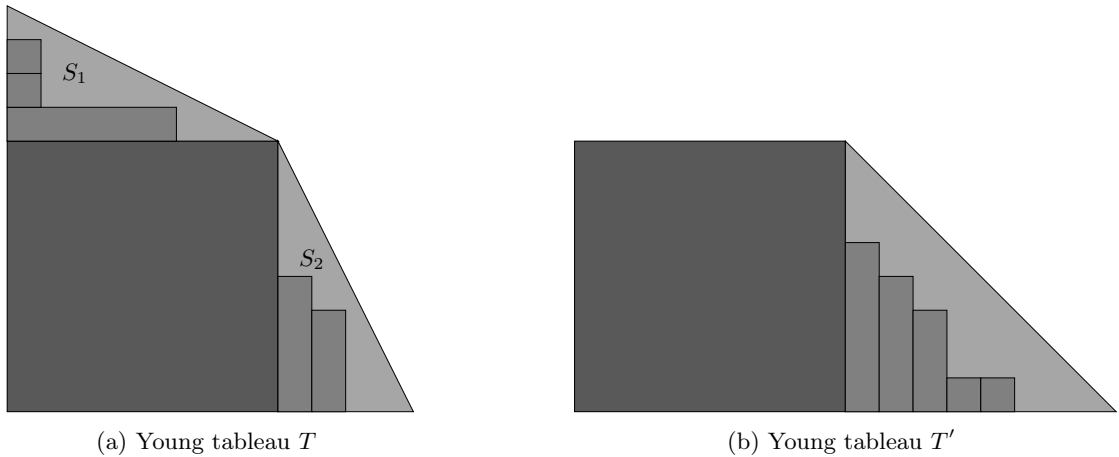
\begin{figure}[h]
\begin{minipage}[b]{.49\linewidth}
\begin{center}
\scalebox{0.9}{
\begin{tikzpicture}
\draw [fill = dark-gray](0,0) -- (4,0) -- (4,4) --(0,4) -- cycle;
\draw [fill = medium-gray](6,0) -- (4,0) -- (4,4) -- cycle;
\draw [fill = medium-gray](0,6) -- (0,4) -- (4,4) -- cycle;
\draw [fill = half-gray](0,4.5) -- (2.5,4.5) -- (2.5,4) --(0,4) -- cycle;
\draw [fill = half-gray](0,4.5) -- (0.5,4.5) -- (0.5,5) --(0,5) -- cycle;
\draw [fill = half-gray](0,5.5) -- (0.5,5.5) -- (0.5,5) --(0,5) -- cycle;
\draw [fill = half-gray](4.5,2) -- (4.5,0) -- (4,0) --(4,2) -- cycle;
\draw [fill = half-gray](4.5,1.5) -- (4.5,0) -- (5,0) --(5,1.5) -- cycle;
\draw (1,5) node { $S_1$};
\draw (4.5,2.25) node {$S_2$};
\end{tikzpicture}}\\
\subcaption{Young tableau $T$}
\label{fig:T}
\end{center}
\end{minipage}\quad
\begin{minipage}[b]{.49\linewidth}
\begin{center}
\scalebox{0.9}{
\begin{tikzpicture}
\draw [fill = dark-gray](0,0) -- (4,0) -- (4,4) --(0,4) -- cycle;
\draw [fill = medium-gray](8,0) -- (4,0) -- (4,4) -- cycle; 
\draw [fill = half-gray](4.5,2.5) -- (4.5,0) -- (4,0) --(4,2.5) -- cycle;
\draw [fill = half-gray](4.5,2) -- (4.5,0) -- (5,0) --(5,2) -- cycle;
\draw [fill = half-gray](5.5,1.5) -- (5.5,0) -- (5,0) --(5,1.5) -- cycle;
\draw [fill = half-gray](5.5,0.5) -- (5.5,0) -- (6,0) --(6,0.5) -- cycle;
\draw [fill = half-gray](6.5,0.5) -- (6.5,0) -- (6,0) --(6,0.5) -- cycle;
\end{tikzpicture}}
\subcaption{Young tableau $T'$}
\label{fig:T'}
\end{center}
\end{minipage}
\caption{Sketch of the rearrangement in Lemma~\ref{lem:imbalance}}
\end{figure}

\begin{thr}\label{thr:finalstep_Bnmy}
For fixed integers $m$ and $n$, let $T_{*}$ be a tableau with $m$ cells for which the sum of its length $x$ and width $y$ is at most $n$. If $h(T_{*})$ is maximal among all such tableaux, then $x,y$ satisfy $xy-\min\{x,y\}<m \le xy.$
\end{thr}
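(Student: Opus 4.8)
The plan is to argue by contradiction, exploiting the two preceding lemmas. First I would use the transposition symmetry of tableaux (which preserves the cell count $m$, the value $h$, and the quantity $x+y$) to assume $x\ge y$, so that $\min\{x,y\}=y$. The upper bound $m\le xy$ is immediate, since $T_*$ sits inside its $x\times y$ bounding box. For the lower bound I would suppose, for contradiction, that $m\le xy-y$, i.e.\ the deficiency $r:=xy-m$ is at least $y$, and aim to produce a competing tableau with the same number of cells $m$ and with length $+$ width at most $n$ but with strictly larger $h$, contradicting the maximality of $h(T_*)$.

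The starting structural reduction is Lemma~\ref{lem:imbalance}: for the extremal $T_*$ we may assume the minimum degree on the smaller side is at least the maximum degree on the larger side, which in the orientation $x\ge y$ says that every row has length at least $y$. Hence $T_*$ contains the full square $[y]\times[y]$ and its empty cells form a Young staircase of total size $r\ge y$ anchored in the top-right corner of the box. The engine for the contradiction is Lemma~\ref{lem:crucialcomparison}: it shows that a box-minus-hook tableau of length $x$ has strictly smaller $h$ than a narrower near-rectangle of length $x-w$ with the same number of cells. Note that the narrower competitor automatically satisfies the order constraint, since $(x-w)+y<x+y\le n$, so any such strict improvement yields the desired contradiction.

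The cleanest way to organize the argument is an induction on the length $x$ (equivalently on the deficiency $r$): while $r\ge y$ I would reduce $x$, re-balance the resulting shape via Lemma~\ref{lem:imbalance} so that it again contains $[y]\times[y]$, and check that $h$ strictly increases; once the deficiency has been brought into hook form, a single application of Lemma~\ref{lem:crucialcomparison} collapses it to a near-rectangle whose deficiency $r'$ is less than $y$, that is, to a tight box, as required. The parameters must be matched so that the hypotheses $1\le r\le x-y$, $1\le s<y$ and $r+s-1=wy+r'$ of Lemma~\ref{lem:crucialcomparison} hold; since its competitor has length $x-w<x$, chaining these reductions terminates precisely when the box becomes tight, giving $xy-y<m\le xy$ and (after undoing the transposition) the general statement $xy-\min\{x,y\}<m\le xy$.

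The hard part will be bridging from a \emph{general} extremal shape to the special box-minus-hook configuration of Lemma~\ref{lem:crucialcomparison}. The deficiency of the $h$-maximizer \emph{within a fixed box} need not be a single hook — it can be a non-hook Young shape, for instance a small rectangle — so Lemma~\ref{lem:crucialcomparison} does not apply to $T_*$ as it stands. Worse, one cannot simply dominate $T_*$ by a hook-deficiency shape in the same box, because the two degree sequences involved are typically not comparable under majorization (this is exactly why Lemma~\ref{lem:crucialcomparison} is proved by the delicate integral estimate $\int_{0}^{1}\log\bigl((x-u)(t-u)\bigr)\d u<\int_{0}^{1}\log\bigl((x-w-u)(t'-u)\bigr)\d u$ rather than by Karamata alone). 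The crux of the whole proof is therefore the width-reduction step: transforming a staircase deficiency into a hook deficiency while keeping $m$ fixed, not increasing $x+y$, preserving the Young property, and strictly increasing $h$. I expect this to be the only place requiring real work, with everything else following from Lemmas~\ref{lem:imbalance} and~\ref{lem:crucialcomparison} and the termination of the induction.
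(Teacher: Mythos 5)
Your skeleton matches the paper's: argue by contradiction, normalize $x\ge y$ (transposition), note $m\le xy$ is trivial, invoke Lemma~\ref{lem:imbalance} to get $[y]\times[y]\subseteq T_{*}$, and use Lemma~\ref{lem:crucialcomparison} to produce a strictly better tableau whose length is smaller, so that the order constraint stays satisfied. But the step you yourself flag as ``the crux'' --- passing from an arbitrary extremal shape, whose deficiency is a general staircase, to a configuration where Lemma~\ref{lem:crucialcomparison} actually applies --- is precisely the content of the paper's proof, and your proposal leaves it open. As written, this is a plan with the key step missing, not a proof.

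The paper closes this gap not by converting the staircase deficiency of the whole tableau into a hook (which, as you correctly suspect, cannot be done by majorization alone), but by localizing. Let $x_1=x\ge x_2\ge\cdots\ge x_y$ be the row lengths and let $i$ be the smallest index with $x_i<x-1$; such an $i\ge 2$ exists, since otherwise $m\ge x+(x-1)(y-1)>(x-1)y$, contradicting the assumption $m\le (x-1)y$. By minimality of $i$, every row above row $i$ has length $x$ or $x-1$, so the truncation $T_i=T_{*}\cap\left([x]\times[i]\right)$ is \emph{exactly} the hook configuration $T$ of Lemma~\ref{lem:crucialcomparison}, with width $i$, with $r=x-x_i$, and with $s-1$ equal to the number of rows of length $x-1$; its hypotheses hold because $x_i\ge y\ge i$ (Lemma~\ref{lem:imbalance}) gives $r\le x-i$, and $x_1=x$ gives $s<i$. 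Replacing $T_i$ by the lemma's competitor $T'_i$ inside $T_{*}$ changes only rows $1,\ldots,i$ and columns $x_i+1,\ldots,x$, and those columns contain no cell of $T_{*}$ below row $i$ (row lengths are non-increasing), so the change is purely local: $h(T'_{*})-h(T_{*})=h(T'_i)-h(T_i)>0$, while the length does not increase. This single local replacement already contradicts extremality of $T_{*}$ --- no chaining, induction on $x$, or ``re-balancing'' is needed; note also that Lemma~\ref{lem:imbalance} is a statement about extremal tableaux, not an operation you may re-apply to intermediate shapes in such a chain, so the iterative scheme you sketch would need extra justification even if its main step were filled in.
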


\begin{proof}
    Assume $m$ cannot be written as a product $xy$ with $x+y\le n$ and 
    $T_{*}$ is an extremal tableau with $m\le (x-1)y,$ where $y <x.$
    Let $x_1=x, x_2, \ldots, x_y$ be the number of cells in every row.
    By Lemma~\ref{lem:imbalance}, we know that $[y]\times [y] \subset T_{*}$ and thus $x_i \ge y$ for every $1 \le i \le y.$
    Furthermore, since $m\le (x-1)y,$ there is a smallest index $i$ such that $x_i<x-1.$
    Now, the first $i$ rows of $T_{*}$, i.e. $T_i=T_{*} \cap \left([x]\times [i]\right)$, form a tableau that has the form of the tableau $T$ in Lemma~\ref{lem:crucialcomparison}.
    Replacing $T_i$ by the corresponding $T'_i$ from Lemma~\ref{lem:crucialcomparison} (which does not increase the length of the tableau) will imply that we also form a $T'_{*}$ for which $h(T'_{*})>h(T_{*}),$ since $h(T'_{*})-h(T_{*})=h(T'_i)-h(T_i)>0.$
    For the latter, note that the only rows and columns that possibly changed, are rows $1$ till $i$ and columns $x_i+1$ till $x,$ all of which do not contain any cell outside $T_i$ or $T'_i$.
    This is the desired contradiction.
    Figure~\ref{fig:Youngtableau_conclusion} presents this final comparison.
\end{proof}

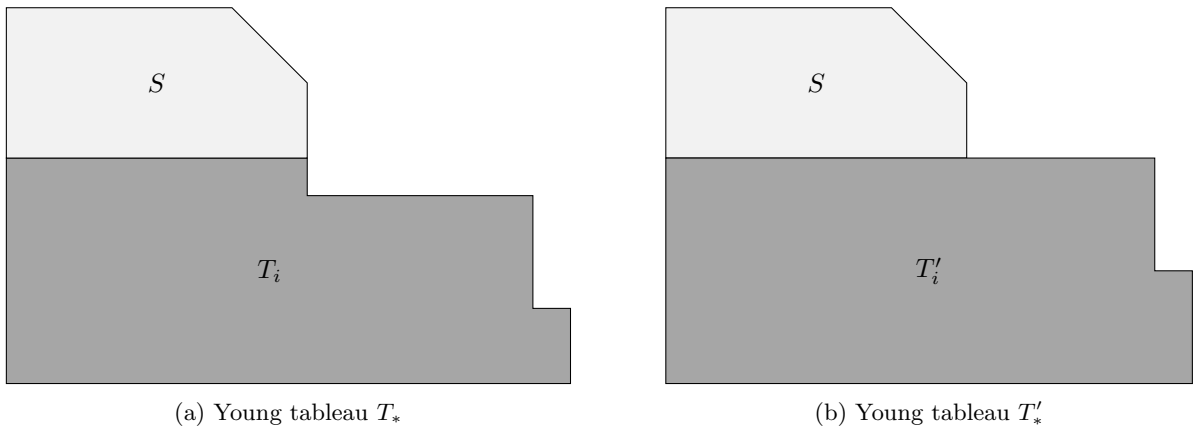
\begin{figure}[h]
\begin{minipage}[b]{.49\linewidth}
\begin{center}
\begin{tikzpicture}
\draw [fill = medium-gray](0,0) -- (7.5,0)--(7.5,1) -- (7,1)--(7,2.5) --(4,2.5) --(4,3)--(0,3)-- cycle;
\draw [fill = light-gray](0,5)--(3,5)--(4,4)--(4,3)--(0,3)-- cycle;
\draw (2,4) node {$S$};
\draw (3.5,1.5) node {$T_i$};
\end{tikzpicture}
\subcaption{Young tableau $T_{*}$}
\end{center}
\end{minipage}
\quad
\begin{minipage}[b]{.49\linewidth}
\begin{center}
\begin{tikzpicture}
\draw [fill = light-gray](0,5)--(3,5)--(4,4)--(4,3)--(0,3)-- cycle;
\draw [fill = medium-gray](0,0) -- (7,0) -- (7,1.5) --(6.5,1.5) --(6.5,3)--(0,3)-- cycle;
\draw (2,4) node {$S$};
\draw (3.5,1.5) node {$T'_i$};
\end{tikzpicture}
\subcaption{Young tableau $T'_{*}$}
\end{center}
\end{minipage}
\caption{The local move increasing $h(T_{*})$ in Theorem~\ref{thr:finalstep_Bnmy} }\label{fig:Youngtableau_conclusion}
\end{figure}

\section{Maximum entropy and extrema of other chemical indices}\label{sec:chemicalindex}

The degree-based entropy is one example of a degree-based topological index (or chemical index), the latter defined as
$$\TI_g(G) = \sum_{uv \in E(G)} g(\deg(u), \deg(v)),$$
for some function $g$, as initially done in e.g.~\cite{gutman2013degree}.
See e.g.~\cite[Table~1]{LP22} for a clear overview of some more examples. 
In this case, one can conclude that the interesting graphical function-indices, defined similarly as $\sum_{v \in V(G)} g(\deg(u))$, will essentially boil down to $g(x)=c_1x+c_2 x \log(x)$ and thus be directly related to the entropy case. So we focus on the chemical indices.
The function $h(G)$ can be written as $\TI_g$ where $g(x,y)=\log(xy)$.
Our idea of approach $2$ in Subsection~\ref{subsec:approach2} works for any $\TI_g$ whenever $g$ is an increasing, concave function in $xy$.
E.g. when $g(x,y)=xy$ or $g(x,y)=\sqrt{xy}$ (i.e. for the Second Zagreb index and Reciprocal Randic index), the graphs maximizing $\TI_g(G)$ among all bipartite graphs of size $m$ are precisely the complete bipartite graphs $K_{q,y}$ with $q \cdot y=m$. 
When we know both the order and size, one can expect that a similar exposition implies that the extremal graphs are again near complete bipartite, i.e. $K_{y,x}$ with some small number $r$ of removed (or added) edges. Xu et al.~\cite{XTLW15} studied these for the 
first and second 
Zagreb indices for example.

On the other hand, if $g(x,y)=x+y$ or $g(x,y)=(x+y)^2$, since $\deg(u)+\deg(v) \le m+1$ for every $uv \in E(G)$ the unique extremal graph (maximizing $\TI_g(G)$) is easily seen to be the star $K_{1,m}.$
In particular, this implies that among bipartite graphs of a given size, the set of graphs maximizing the First Zagreb index and Second Zagreb index are not equal. This contrasts some of the intuition in the concluding section of~\cite[Sec.~4]{XTLW15}. 
Also among bipartite graphs of a given order and size,
the bipartite graphs with the maximum first Zagreb index
might be different than the one with the maximum second Zagreb index.
For the first Zagreb index these extremal bipartite graphs were determined by Zhang and Zhou~\cite{Zhang2014}. Here the asymmetry (large degrees) play a major role and so there are examples for which the bipartite graphs maximizing the first Zagreb index are not complete bipartite, while there do exist complete bipartite graphs $K_{y,x}$ with $xy=m$ and $x+y<n$ that maximize the second Zagreb index.

Finally, we prove that the graphs with maximum entropy can be easily determined and are the graphs for which the degree sequence is as balanced as possible.
We prove these statements by showing that the latter graphs are those which minimize $\TI_g(G)$ for any increasing, strictly convex function $g$.
Thus, we give a short alternative proof for the result in~\cite{Dong21} in this more general setting and for the result of~\cite{HLP22}.

Among all graphs with fixed size $m$, without constraint on the order, $mK_2$ is the unique extremal graph maximizing the entropy (note that $h(mK_2)=0)$ and minimizing $\TI_g(G)$ as well (here the order $n \ge 2m$ and isolated vertices are taken into account as well when $g(0) \not=0$).

If there is a condition on both the order and size, the extremal graphs and bipartite graphs are precisely the nearly-regular graphs, as proven in the following two propositions.
Cheng et al.~\cite{CGZD09} determined these bipartite graphs with the minimum value of the first Zagreb index for bipartite graphs of a given order and size.
The proof is essentially a corollary of Karamata's inequality and noting that there is a degree sequence that is majorized by all other possible realizations.

\begin{prop}
    Let $g$ be an increasing, strictly convex function.
    Among all graphs with fixed size $m$ and order $n$, every nearly-regular graph (i.e. $\deg_{max}-\deg_{min} \le 1$) is extremal and these are the only extremal graphs.
\end{prop}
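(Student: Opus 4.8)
The plan is to show that any nearly-regular graph on $n$ vertices and $m$ edges has a degree sequence that is majorized by \emph{every} other realizable degree sequence with the same order and size, and then invoke Karamata's inequality (Theorem~\ref{thr:Kar}) for the strictly convex function $g$ to conclude extremality and uniqueness simultaneously. First I would fix the parameters: with $n$ vertices and $m$ edges the degree sum is $2m$, so writing $2m = kn + t$ with $0 \le t < n$, a nearly-regular degree sequence consists of $t$ vertices of degree $k+1$ and $n-t$ vertices of degree $k$. I would first verify that such a sequence is graphical (e.g.\ via the Erd\H{o}s--Gallai conditions, or simply by exhibiting a concrete construction such as a circulant), so that a nearly-regular graph actually exists.

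The heart of the argument is the majorization claim: if $(d_i)_{1\le i\le n}$ is the non-increasing degree sequence of \emph{any} graph with $n$ vertices and $m$ edges, then $(d_i)$ majorizes the nearly-regular sequence $(d_i^{*})$ described above. Since both sequences sum to $2m$, the equality at $j=n$ in the definition of majorization is automatic, so I only need the partial-sum inequalities $\sum_{i\le j} d_i \ge \sum_{i\le j} d_i^{*}$ for each $1 \le j \le n$. I would prove this by a short direct argument: because $(d_i^{*})$ takes only the two consecutive values $k+1$ and $k$ and is as flat as possible, its partial sums are the smallest possible among all non-increasing integer sequences with the given length and total sum. Concretely, for $j \le t$ the partial sum of $(d_i^{*})$ is $j(k+1)$, and for $j > t$ it is $t(k+1) + (j-t)k = jk + t$; in either case one checks that no non-increasing sequence summing to $2m$ can have a smaller $j$-th partial sum, since making any prefix smaller would force some later entry to exceed it, violating monotonicity. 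This is the step I expect to be the main (though still elementary) obstacle, as it must be phrased carefully to cover both ranges $j \le t$ and $j > t$ uniformly.

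Once the majorization is established, the conclusion is immediate from Theorem~\ref{thr:Kar}: for any graph $G$ with order $n$ and size $m$ and any increasing, strictly convex $g$, we have $\sum_i g(d_i) \ge \sum_i g(d_i^{*})$, so the nearly-regular graphs minimize $\TI_g$. The strictness clause of Karamata's inequality gives that equality holds if and only if the two degree sequences coincide, i.e.\ if and only if $G$ is itself nearly-regular; this yields both ``every nearly-regular graph is extremal'' and ``these are the only extremal graphs'' at once. Finally, translating back to entropy via the identity $I(G) = \log(2m) - \tfrac{1}{2m}h(G)$ with $g(x) = f(x) = x\log x$ (which is increasing and strictly convex on the relevant range) shows that the nearly-regular graphs are exactly those maximizing the entropy, recovering the results of~\cite{Dong21} and~\cite{HLP22} as stated.
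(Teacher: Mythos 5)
Your proposal is correct and follows essentially the same route as the paper: the nearly-regular degree sequence is majorized by every other degree sequence of an $(n,m)$-graph, Karamata's inequality with its strictness clause then gives both extremality and uniqueness, and one concludes with an existence construction. The only difference is one of emphasis: you spell out the majorization step (which the paper treats as immediate), while the paper spells out the explicit existence construction via matching/$2$-factor decompositions of $K_n$ (which you only sketch via Erd\H{o}s--Gallai or a circulant); both gaps are elementary and fillable.
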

\begin{proof}
     Since the degree sequence of any nearly-regular graph is majorized by the degree sequence of any  other $(n,m)$-graph, the result follows by Karamata's inequality. The characterization of the extremal graphs is a consequence of $g$ being strictly convex.
     The existence of nearly-regular graph (even connected if $m\ge n-1$ when necessary) is immediate.
     If $n$ is even, then $K_n$ can be partitioned into $n-1$ perfect matchings and one can add edges from one perfect matching at a time.
     If $n$ is odd, then $K_n$ can be partitioned into $\frac{n-1}{2}$ $2$-regular graphs, where at least one of them is a cycle (when $n=2k+1$ and $V=\{v_1,v_2,\ldots, v_n\}$, for every $1 \le \ell \le k$, form the graph with edges $v_iv_j$ for which $ i-j \equiv \pm \ell \pmod n$).
     Now when $m=an+b,$ one can take the union of $a$ such $2$-regular graphs and a matching or its complement of the cycle.
\end{proof}

\begin{prop}
    Among all bipartite graphs with size $m$ and order $n$, every bipartite graph $G=(U \cup V,E)$ with $\lvert U \rvert = \left\lceil \frac n2 \right\rceil$ and $\lvert V \rvert = \left\lfloor \frac n2 \right\rfloor$ for which the degrees in one partition class differ by at most one attains the maximum entropy in the graph class. Furthermore, a bipartite graph is extremal if and only if it 
    has the same degree sequence as this balanced example.
\end{prop}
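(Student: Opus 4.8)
The plan is to reduce the problem to a single extremal degree sequence and then invoke Karamata's inequality. Since $I(G)=\log(2m)-\frac{1}{2m}h(G)$ and $h(G)=\sum_i f(\deg_i)$ depends only on the degree sequence of $G$, maximizing the entropy over all bipartite $(n,m)$-graphs is equivalent to minimizing $h$. By Corollary~\ref{cor:majdeg_kar} and the strict convexity of $f$, it therefore suffices to exhibit one bipartite $(n,m)$-degree sequence $D^{*}$ that is majorized by the degree sequence of every bipartite $(n,m)$-graph: then $h$ is minimized exactly by the graphs whose degree sequence equals $D^{*}$, and it only remains to identify these with the balanced graphs in the statement. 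Note that the sharp condition is really on the full degree sequence, which is what the ``furthermore'' clause records.

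The candidate is the degree sequence $D^{*}$ of the graph $G^{*}$ with $|U|=\lceil n/2\rceil$ and $|V|=\lfloor n/2\rfloor$ in which the degrees within each partition class are as equal as possible (each degree in $U$ is $\lfloor m/|U|\rfloor$ or $\lceil m/|U|\rceil$, and similarly for $V$). First I would record realizability: any pair of near-regular sequences with the same sum $m$, on classes of sizes $a$ and $b$, is realizable by a simple bipartite graph (a standard Gale--Ryser feasibility check, or directly by filling the $a\times b$ grid cyclically so that every row and every column receives a balanced number of cells). In particular $G^{*}$ exists.

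The core of the argument is the majorization claim, which I would split into two steps. For a fixed bipartition into classes of sizes $a$ and $b$ with $a+b=n$, every bipartite graph on these classes has a degree sequence majorizing that of the graph $\mathcal N(a,b)$ whose two classes are both near-regular: on each side the degree sequence majorizes the balanced distribution of $m$ over that side, and replacing the degrees of one block by a majorized block while leaving the other block fixed preserves majorization of the merged sorted sequence (a doubly-stochastic / Hardy--Littlewood--P\'olya observation). Applying this once on each side shows $\mathcal N(a,b)$ is the per-partition minimum. It then remains to compare across partitions, i.e.\ to show that $\mathcal N(\lceil n/2\rceil,\lfloor n/2\rfloor)$ is majorized by $\mathcal N(a,b)$ for every $a+b=n$. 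I would prove this by a one-vertex shift: moving a vertex from the larger class to the smaller one (so that $a-b$ decreases by $2$) replaces $\mathcal N(a,b)$ by a sequence that $\mathcal N(a,b)$ majorizes, and iterating drives the partition to the balanced one. This cross-partition comparison is the main obstacle: unlike the within-partition step it is genuinely special to the bipartite setting, since the globally near-regular sequence $\rho(2m,n)$ need not be bipartite-realizable, and it must be verified by an explicit (if routine) comparison of the partial sums of the two near-regular pairs.

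Combining the two steps yields that $D^{*}$ is majorized by the degree sequence of every bipartite $(n,m)$-graph. By Corollary~\ref{cor:majdeg_kar}, every such graph $G$ satisfies $h(G)\ge h(G^{*})$, hence has entropy at most that of $G^{*}$; and since $f$ is strictly convex, equality forces the two degree sequences to coincide. Thus the maximizers of the entropy are exactly the bipartite $(n,m)$-graphs whose degree sequence equals $D^{*}$, namely the balanced graphs described, which is the asserted characterization.
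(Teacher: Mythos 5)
Your proposal is correct and shares the paper's overall frame: exhibit one bipartite degree sequence that is majorized by the degree sequence of every bipartite $(n,m)$-graph, conclude via Corollary~\ref{cor:majdeg_kar} (with strict convexity giving the ``same degree sequence'' characterization), and settle existence by an explicit construction. Where you genuinely diverge is in how the majorization claim is proved. The paper does it in one shot against an arbitrary competitor $G'=(U'\cup V',E')$ with $\lvert U'\rvert = u \ge \lvert V'\rvert = v$: the sums of the $i\le v$ largest degrees are bounded below through $V'$, the sums of the $i\le\left\lceil \frac n2\right\rceil$ smallest degrees are bounded above through $U'$, and the middle range of partial sums is disposed of by the trivial remark that any $i$ degrees of $U'$ sum to at most $m$. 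You instead factor through the doubly near-regular sequence $\mathcal{N}(u,v)$ on the competitor's own partition (a clean use of the Hardy--Littlewood--P\'olya merge observation: replacing one block by a majorized block preserves majorization of the merged sequence), and then descend to the balanced partition by a chain of one-vertex shifts $\mathcal{N}(a,b)\succ\mathcal{N}(a-1,b+1)$. Both routes work, and your decomposition has the merit of isolating exactly where bipartiteness matters, namely the cross-partition step. The caveat is that this step--which you correctly flag but leave as ``routine''--is where all the real content sits: verifying it requires the same kind of careful partial-sum comparison of two near-regular pairs (with attention to floors and ceilings, and to the fact that the sorted merge of the two classes may interleave when the degrees of the two sides differ by one) that the paper carries out directly; in fact the paper's argument, run with $G'=\mathcal{N}(a,b)$ against the target partition $(a-1,b+1)$, is precisely a proof of your one-step claim, and the needed realizability $m\le(a-1)(b+1)$ follows from $m\le ab$ and $a\ge b+2$. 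So your plan is sound and arguably more modular, but it postpones rather than avoids the core computation, and once written out in full it is no shorter than the paper's direct comparison.
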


\begin{proof}
Let $G'=(U' \cup V',E')$ be any other bipartite graph with size $m$ and order $n$ whose partition classes have size $u \ge v$.
The sum of the $i \le v$ largest degrees in $V'$ are at least equal to the sum of the $i$ largest degrees of $G$ and the sum of the $i\le u$ smallest degrees in $U'$ are at most the sum of the $i$ smallest degrees in $G$. Here we use that $u \ge \left\lceil \frac n2 \right\rceil $ and $v \le \left\lfloor \frac n2 \right\rfloor$ and the degrees in $G$ are as balanced as possible.
Remark that if $u \ge i >\left\lceil \frac n2 \right\rceil $, then the sum of the $i$ degrees in $U'$ is at most $m$.
So we conclude that the degree sequence of $G$ is majorized by any degree sequence of any other bipartite graph with order $n$ and size $m$, from which we conclude (also for  the uniqueness statement) by Karamata's inequality and $g(x)$ being strictly convex.

We remark that one can always construct at least one such a balanced bipartite graph $G$.
If $n$ is even, just partition $K_{n/2,n/2}$ in perfect matchings and add the edges from one matching at a time up to the point you selected precisely $m$ edges.
When $n$ is odd, for every $1 \le m \le \left\lfloor \frac{n^2}{4} \right\rfloor,$ one can construct a graph by adding for every $1 \le k \le m$ an edge between the vertices $a_i$ and $b_j$ for which $k \equiv i \pmod{ \left\lceil \frac n2 \right\rceil}$ and $k \equiv j \pmod{ \left\lfloor \frac n2 \right\rfloor}$ respectively.
\end{proof}

A few examples have been presented in Figure~\ref{fig:maxentropy_bipgraphs}.
Since everything boils down to having a degree sequence that is as balanced as possible, i.e. it is majorized by any other degree sequence of a graph, the result only depends on the degree sequence. 

We end with a few observations.
There do exist both connected and disconnected extremal bipartite graphs (as presented for $m=n=9$).
If $n$ is odd, it is possible there is no extremal bipartite graph attaining the maximum over all graphs, since for bipartite $(n,m)$ graphs we might be forced to have $\deg_{max}-\deg_{min} \ge 2$ (equality can be attained), as is the case with $m=n=9$.
Finally, we observe that the partition itself does not necessarily has to be balanced (e.g. $m=30, n=22, \lvert U\rvert =10$ and $\lvert V\rvert =12$) .

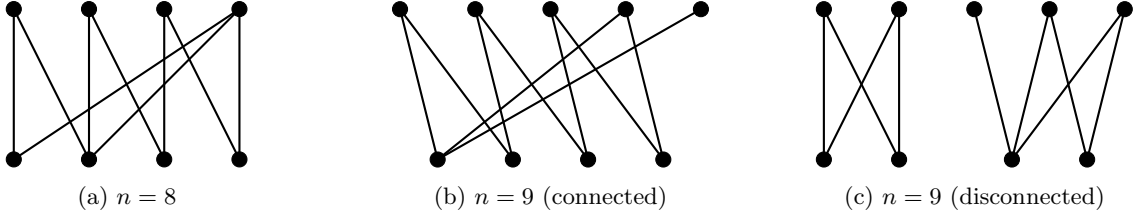
\begin{figure}[h]
\begin{minipage}[b]{.32\linewidth}
\begin{center}
    \begin{tikzpicture}
    {
    
	\foreach \x in {1,2,3,4}{\draw[thick] (\x,2) -- (\x,0);}
	\foreach \x in {1,2,3}{\draw[thick] (\x,2) -- (\x+1,0);}
	\draw[thick] (4,2) -- (2,0);
	\draw[thick] (1,0) -- (4,2);
	\foreach \x in {1,2,3,4}{\draw[fill] (\x,0) circle (0.1);\draw[fill] (\x,2) circle (0.1);}	
	}
	\end{tikzpicture}\\
	\subcaption{$n=8$}\label{fig:1a}
\end{center}
\end{minipage}\quad\begin{minipage}[b]{.32\linewidth}
\begin{center}
     \begin{tikzpicture}
    {	
	\foreach \x in {1,2,3,4}{\draw[thick] (\x,5) -- (\x+0.5,3);}
	\foreach \x in {1,2,3}{\draw[thick] (\x,5) -- (\x+1.5,3);}
	\draw[thick] (5,5) -- (1.5,3);
	\draw[thick] (4,5) -- (1.5,3);
	\foreach \x in {1,2,3,4,5}{\draw[fill] (\x,5) circle (0.1);}
	\foreach \x in {1,2,3,4}{\draw[fill] (\x+0.5,3) circle (0.1);}
    }
    \end{tikzpicture}\\
    \subcaption{$n=9$ (connected)}\label{fig:1b}
\end{center}
\end{minipage}\quad\begin{minipage}[b]{.32\linewidth}
\begin{center}
     \begin{tikzpicture}
    {	
	\foreach \x in {1,2}{\foreach \y in {1,2}{\draw[thick] (\x,5) -- (\y,3);}}
	\foreach \x in {3,4,5}{\draw[thick] (\x,5) -- (3.5,3);}
	\foreach \x in {4,5}{\draw[thick] (\x,5) -- (4.5,3);}
	\foreach \x in {1,2,3,4,5}{\draw[fill] (\x,5) circle (0.1);}
	\foreach \x in {1,2}{\draw[fill] (\x,3) circle (0.1);}
	\foreach \x in {3,4}{\draw[fill] (\x+0.5,3) circle (0.1);}
    }
    \end{tikzpicture}\\
    \subcaption{$n=9$ (disconnected)}\label{fig:1c}
\end{center}
\end{minipage}
\caption{Bipartite graphs of size $9$ with maximum entropy}\label{fig:maxentropy_bipgraphs}
\end{figure}

\section{Conclusion}\label{sec:conc}

In this paper, we proved that the bipartite graphs of size $m$ that maximize the entropy are complete bipartite graphs.
If there is a restriction on the order (at most $n$) and $m$ cannot be written as $x\cdot y$ where $x+y \le n$, then the extremal graphs are of the form $B(n,m,y),$ i.e. a complete bipartite graph minus a star.
Nevertheless, it seems impossible to determine $y$ with a general formula, as we will explain here.

Using a computer program\footnote{\url{https://github.com/MatteoMazzamurro/extrema-graph-entropy/blob/main/bipartite_graphs_entropy.R}}, we could find all extremal bipartite graphs for $ n \le 50$ and $m \le \left\lfloor \frac{n^2}{4} \right\rfloor.$
The output is presented in\footnote{\url{https://github.com/MatteoMazzamurro/extrema-graph-entropy/blob/main/B_n_m_50.csv}}.
For every pair $(n,m)$ and value $1 \le y \le \sqrt{m}$, we computed the values     $$q=\left\lfloor\frac{m}{y}\right\rfloor,\;  x=\left\lceil\frac{m}{y}\right\rceil,\; m-qy,\; xy-m,\; h=h(B(n,m,y))$$
and three boolean values expressing whether $B(n,m,y)$ achieved the maximum value of $h$ among all choices of $y$ and whether $m-qy$ or $xy-m$ were minimal among all possible choices for $y$ for that pair $(n,m).$
Among all extremal graphs which are not complete bipartite, we found $547$ cases for which both $xy-m$ and $m-qy$ are minimal among all possibilities, $375$ cases where only $m-qy$ is minimal, $3635$ cases where only $xy-m$ is minimal, and no cases for which neither $xy-m$ nor $m-qy$ is minimal.

Intuitively, when $xy-m$ or $m-qy$ is small, the graph $B(n,m,y)$ is nearly complete bipartite (up to the edges of a small star which has been added or removed) and so it might be extremal.
Nevertheless, this was found not to be the case in general.
When $n=17726$ and $m=318728$, the extremal graph is $B(n,m,18)$, which satisfies $q=17707$, $y=18$ and $r=m-qy=2,$ even though $y=139$ and $q=2293$ would give $r=1$.
Taking into account the estimates in Proposition~\ref{prop:denseTbq}, one can expect that there are more such examples since the difference with the upper bound depends on both $r$ and $y$, i.e. one would like $y$ and $r$ to be both small and the ratio has an influence on this. A precise statement here would be difficult, as it is related to some hard number theoretic questions.

Comparing with Proposition~\ref{prop:denseTxy}, whenever $m$ is large, one would wish that there was a small value of $r$ for which $m+r$ could be written as a product $xy$ with $x+y \le n.$
So one may expect from these observations that in most cases, the extremal graph will be $B(n,m,y)$ with $y$ chosen in such a way that $y\left\lceil \frac m y \right\rceil -m$ is minimized under the constraint that $y+\left\lceil \frac m y \right\rceil \le n,$ and so the extremal graph is a ``complete bipartite graph minus a small star".
Furthermore, when there are multiple choices for $y$, we need to choose the smallest $y$, i.e. for fixed $r$, the complete bipartite graph will be as asymmetric as possible.
To note the latter, one can compare the computations in Proposition~\ref{prop:denseTxy} for some $x'>x>y>y'$ with $xy=x'y'$.
For a fixed $1 \le k \le y-1$, we have $\Delta(x)+\Delta(y+1-k) \ge \Delta(x')+\Delta(y'+1-k)$ since $(x-u)(y+1-k-u) \ge (x'-u)(y'+1-k-u)$ for $u \in [0,1].$

We can prove that the intuition that $r=xy-m$ is minimal for the extremal $B(n,m,y)$ is true in many cases, using the notion of $M$-friable numbers.
A positive integer is called $M$-smooth or $M$-friable if its largest prime divisor is at most $M.$
At this point, the state-of-the-art on smooth/friable numbers~\cite{MR16_smooth, E17_smooth} does not address the question if for fixed constants $0<u<1$ and $c>0$, the interval $[m,m+c \log(m)]$ necessarily needs to contain a $m^u$-smooth number whenever $m$ is sufficiently large, but it does so for almost all $m$ by~\cite[Cor.~6]{MR16_smooth}.

Write $n=(m+\log(m))^u+(m+\log(m))^{1-u}$ for some real number $\frac 23 \le  u < 1$.
Now for almost all $m$, we know that there does exist a number $r \le 0.5 (1-u) \log(m)$ such that $m+r$ is $m^u$-smooth (by~\cite[Cor.~6]{MR16_smooth}).
Now $m+r$ being $m^u$-smooth implies that all its prime divisors are bounded by $m^u$ and thus also $(m+r)^u.$
Thus $m+r$ has two divisors $x,y$ for which $(m+r)^{1-u}\le y \le x \le  (m+r)^u$.
For this, note that if the largest prime factor $p$ of $m+r$ satisfies $(m+r)^{1-u}\le p \le (m+r)^u$, we can take $\{x,y\}$ to be equal to $\left\{p, \frac {m+r}p \right\}$ and otherwise we can take a product $P$ of prime divisors of $m+r$ which is between $(m+r)^{1-u}$ and $(m+r)^{2(1-u)} \le (m+r)^u$ and take $\{x,y\}$ to be equal to $\left\{P, \frac {m+r}P \right\}$.
Since the function $P+\frac{m+r}{P}$ is increasing for $P \ge \sqrt{m+r}$ and $r < \log m$, we have $x+y <n.$
Comparing Proposition~\ref{prop:denseTbq} and Proposition~\ref{prop:denseTxy} we conclude that the extremal graph is equal to the graph $B(n,m,y)$ where $y$ has been chosen in such a way that $xy-m$ is minimized.
The latter is because $y+\frac{m}{y} \le n$ and $y< \sqrt{m}$ implies that $y>> m^{0.5(1-u)}$ and thus $\log(y)-r$ is large.

On the other hand, there will be infinity many examples which are not of this form.
A simple example for this, where $m-qy$ can be arbitrarily large, can be constructed as follows.
Fix $c>0$ and let $n=2a+1$ and $m=a^2+c$,
where $a=b^2+b$.
Now $B(n,m,a)$ is an example for which $m-qy=c$ is minimized (note that $q+y=2a$ is needed).
On the other hand $B(n,m,y)$ will always satisfy $xy-m \ge 2b-c$ since (note that $x+y=n$ is needed) $$(a-b+1)(a+b)-m=a^2+a-b^2+b>m>a^2+a-b^2-b=(a-b)(a+b+1).$$
For fixed $c$ and $b$ sufficiently large, we conclude as then $2b-c>c \log\left( \frac{2b-c}{c} \right).$

\section*{Acknowledgement}
Thanks to Bart Michels and Régis de la Bretèche for sharing their expertise on smooth/friable numbers.

\paragraph{Open access statement.} For the purpose of open access,
a CC BY public copyright license is applied
to any Author Accepted Manuscript (AAM)
arising from this submission.

\bibliographystyle{abbrv}
\bibliography{bib_entropy}

\end{document}